\pgfplotsset{compat=1.14}
\number\time{}
\newtheorem{theorem}{Theorem}
\newtheorem{Thm}{Theorem}
\newtheorem{Lemma}[Thm]{Lemma}
\newtheorem{Coro}[Thm]{Corollary}
\begin{document}

\title{\bf On a Conjecture of Nagy on Extremal Densities}

\author{
A. Nicholas Day\thanks{Institutionen f\"or Matematik och Matematisk Statistik, Ume{\aa} Universitet, 901 87 Ume{\aa}, Sweden.
Email: nicholas.day@umu.se.  Research supported by Swedish Research Council grant 2016-03488.}
\and
Amites Sarkar\thanks{Department of Mathematics, Western Washington University, Bellingham, Washington 98225, USA.}
}

\maketitle

\begin{abstract}
We disprove a conjecture of Nagy on the maximum number of copies $N(G,H)$ of a fixed graph $G$ in a large
graph $H$ with prescribed edge density. Nagy conjectured that for all $G$, the quantity $N(G,H)$ is asymptotically
maximised by either a quasi-star or a quasi-clique.  We show this is false for infinitely many graphs, the smallest of which has $6$ vertices and $6$ edges.  We also propose some new conjectures for the behaviour of $N(G,H)$, and present some evidence for them.
\end{abstract}

\section{Introduction}

Let $G$ be a fixed small graph, let $\beta\in(0,1)$, and let $n$ be a large positive integer. The problem of
asymptotically minimizing the number of copies of $G$ in a large graph $H$ on $n$ vertices, with edge density $\beta$,
is a very well-studied problem in extremal graph theory. It generalises the forbidden subgraph problem, and has received
much attention in recent years. For instance, a famous conjecture of Sidorenko~\cite{Sid} states that when $G$
is bipartite, the minimiser $H$ is quasirandom, and a celebrated theorem of Reiher~\cite{Reiher} solves the
problem for complete graphs (the minimiser is close to a Tur\'an graph).

In this paper we will study the opposite problem: given $G$, how do we {\it maximise} the number of copies of
$G$ in $H$? As before, $H$ will have order $n$ and edge density $\beta$, and, as before, we are mainly interested
in asymptotics: we will write ``maximiser" for ``asymptotic maximiser" throughout. This problem also has a long
history, going back at least to Ahlswede and Katona~\cite{AK}, who studied the case when $G=P_2$, the path
with two edges. (Throughout this paper, $P_l$ denotes a path with $l$ {\it edges}.) Roughly speaking, Ahlswede and
Katona proved that, for $\beta>\frac{1}{2}$, the maximiser is a {\it quasi-clique}, i.e., a clique $K$, with another vertex
joined to a subset of $V(K)$, together with some isolated vertices; we note that the size of the clique is uniquely
determined by $n$ and $\beta$. When $\beta<\frac{1}{2}$, they proved further that the maximiser is instead a {\it quasi-star}:
the complement of a quasi-clique; the parameters of the quasi-star are again uniquely determined by $n$ and $\beta$.
In short, the maximiser is first a quasi-star and then a quasi-clique, with the ``flip" occurring at $\beta=\frac{1}{2}$. The
paper~\cite{AK} in fact contains an exact result for $G=P_2$, which is surprisingly complicated.

On the other hand, for some graphs $G$, the maximiser is always a quasi-clique, regardless of $\beta$.  Alon showed that this is the case for any graph with with $\alpha^*(G)=v/2$, where $\alpha^*(G)$ is the fractional independence number of $G$, and $v$ is the number of vertices of $G$; see Section \ref{section: definitions and notation}
for a definition of the fractional independence number of a graph. A result of Janson, Oleszkiewicz and Ruci\'{n}ski~\cite{JOR} can be used to show that this condition is in fact an ``if and only if" condition, that is, if the maximiser of $G$ is a quasi-clique for all $\beta$, then $\alpha^*(G)=v/2$.  Alon's result (which was originally formulated
in a different way) was generalised to hypergraphs by Friedgut and Kahn~\cite{FK}.

These results leave many questions open. To restate the basic one: given a small graph $G$ on $v$ vertices, we would
like to know which large graphs $H$ asymptotically maximise $N(G,H)$, the number of unlabelled copies of $G$ in $H$,
where $H$ runs over all graphs on $n$ vertices and edge density $\beta$. Interest in this problem was revitalised
by its connection with graphons, and subsequently by the work of Nagy~\cite{N}, who solved it for $G=P_4$.
Nagy's result is that for $P_4$, as for $P_2$, the maximiser is first a quasi-star and then a quasi-clique, with the
flip occurring this time at $\beta=0.0865...$, instead of $\frac{1}{2}$. By contrast, odd-length paths such as $P_3$ are
covered by Alon's theorem: for them, the maximiser is always a quasi-clique.

Note that, up until now, we have been tacitly assuming that the maximiser is in some sense unique; however, this might
conceivably not be the case, so we should strictly speaking write ``an (asymptotic) maximiser", rather than ``the maximiser".

At the end of his paper~\cite{N}, Nagy posed three questions. The first asked for an exact, not just asymptotic, result
for $P_4$. The second asked whether, for every graph $G$, and every edge density $\beta$, the quantity $N(G,H)$ is always (asymptotically)
maximised when $H$ is either a quasi-star or a quasi-clique. The third question was more cautious: given $G$,
is there always a nontrivial threshold $\beta_G<1$, such that the quasi-clique (asymptotically) maximises $N(G,H)$
for $\beta>\beta_G$?

While the first question remains out of reach, the third was recently answered in the affirmative by Gerbner, Nagy,
Patk\'os and Vizer~\cite{GNPV}, using a recent result of Reiher and Wagner~\cite{RW} (who had in turn extended an earlier
result of Kenyon, Radin, Ren and Sadun~\cite{KRRS}). Reiher and Wagner answered Nagy's second question affirmatively for
stars, i.e., they proved that the maximiser is first a quasi-star and then a quasi-clique when $G$ is a star, with the
flip always occurring at $\beta<1$. Both~\cite{KRRS} and~\cite{RW} make extensive use of graphons.

In this paper, we give a negative answer to the second question. Specifically, we exhibit a 6-vertex graph
$G_6$, such that neither the quasi-star nor the quasi-clique asymptotically maximise $N(G_6,H)$ at any edge density
$\beta\in(0,0.016)$. More generally, we show that any graph $G$ on $v$ vertices satisfying
$\alpha^*(G)>\max(\alpha(G),v/2)$ is also a counterexample.

The plan of this paper is as follows. In Section 2 we introduce some basic definitions and notation, as well as the
graphs $T^e_n(q)$, which lie at the heart of the paper. Section 3 contains our main result, which provides a family
of counterexamples to Nagy's conjecture. In Section 4 we describe the prior work of Janson, Oleszkiewicz and Ruci\'{n}ski mentioned above, and discuss how it relates to these counterexamples. Finally, in Section 5 we propose some new conjectures, which we hope will inspire
further progress in this area.

\section{Definitions and notation}\label{section: definitions and notation}

For a fixed small graph $G$ on $v$ vertices, we write
\[
{\rm ex}(n,e,G)=\max\{N(G,H):|H|=n,e(H)\leqslant e\},
\]
where $N(G,H)$ is the number of unlabelled copies of $G$ in $H$. We will also wish to consider labelled copies of $G$;
in this case we will fix a labelling $G_{l}$ of $G$, and work with $N_l(G_l,H)=N(G,H)|{\rm Aut G}|$ instead. For example,
$N(P_4,C_7)=7$ and $N(P_4,K_5)=60$. With $G$ and $\beta\in[0,1]$ fixed, a family of graphs $(H_n)_{n\ge 1}$, such that each $H_{n}$ has $n$ vertices and edge density $\beta+O(1/n)$, is an {\it asymptotic maximiser} for $G$ at edge density $\beta$ if
\[
N(G,H_n)=(1+o(1)){\rm ex}(n,\lfloor\beta n^2/2\rfloor,G).
\]
A graph homomorphism from $G_{l}$ to $H$ is a map $f:V(G_{l})\rightarrow V(H)$ such that $\{f(u),f(w)\}\in E(H)$ for
all $\{u,w\} \in E(G_{l})$.  We write ${\rm hom}(G_{l},H)$ for the number of homomorphisms from $G_{l}$ to $H$.
Given a family of graphs $(H_{n})_{n \geqslant 1}$ such that $|V(H_{n})| = n$, we have that $N_{l}(G,H_{n}) \leqslant
{\rm hom}(G_{l},H_{n})$ and also $N_{l}(G,H_{n}) = \left(1+o(1)\right){\rm hom}(G_{l},H_{n})$ as $n \rightarrow \infty$. As such,
we define
\begin{equation}
t(G_{l},H_{n}) = \lim_{n \rightarrow \infty} \frac{N_{l}(G_{l},H_{n})}{n^{v}}=
\lim_{n \rightarrow \infty} \frac{{\rm hom}(G_{l},H_{n})}{n^{v}}; \nonumber
\end{equation}
the limit will exist for all the families $H_n$ we consider. We will switch between working with ${\rm hom}(G_{l},H_{n})$,
$N_{l}(G,H_{n})$ and $t(G_{l},H_{n})$, depending on which is convenient at the relevant time.

Given a graph $G$, a function $\phi:V(G) \rightarrow [0,1]$ such that $\phi(u) + \phi(w) \leqslant 1$ for all
$\{u,w\} \in E(G)$ is known as a \textit{fractional independence weighting} of $G$.  The \textit{fractional independence number}
of $G$, written $\alpha^{*}(G)$, is defined as the maximum of $\sum_{u \in V(G)} \phi(u)$ over all fractional independence
weightings of $G$. We will make crucial use of the following result of Nemhauser and Trotter~\cite{NemTro}: any graph $G$ has
a maximal weighting (one that realises $\alpha^*(G)$) in which all the weights are either $0, \frac{1}{2}$ or 1. As is customary, we
write $\alpha(G)$ for the usual ($0-1$) independence number of $G$.

We now turn to some specific families of graphs.
Given $n$ and $e\le{n\choose 2}$, there is a unique quasi-clique $K_{n}^{e}$ with $n$ vertices and $e$ edges. To define it,
we first write $e={a\choose 2}+b$, where $0\le b<a$. The graph $K_{n}^{e}$ is a complete graph $K_a$ with $a$ vertices, with an
additional vertex joined to $b$ vertices of $K_a$, and $n-a-1$ isolated vertices. Likewise, there is a unique quasi-star
$S_{n}^{e}$ with $n$ vertices and $e$ edges; this is just the complement of $K_{n}^{e'}$, where $e'={n\choose 2}-e$.

Here we will be interested in asymptotics only. Thus we will replace the number of edges $e$ by the (asymptotic)
{\it edge density} $\beta=2e/n^2$. For asymptotic purposes, $K_{n}^{e}$ is a clique of size $\sqrt{\beta}n$, and
$V(S_{n}^{e})$ can be partitioned into two sets $R_{S}$ (red vertices) and $B_{S}$ (blue vertices), where the red vertices
span a clique of size $(1-\sqrt{1-\beta})n$, the blue vertices form an independent set of size $\sqrt{1-\beta}n$, and every
blue vertex is joined to every red vertex. Here, and in what follows, we omit floor functions for ease of notation; our ``approximate versions" of $K_{n}^{e}, S_{n}^{e}$ and (in the next paragraph) $T_{n}^{e}(q)$ will not have exactly $n$
vertices and $e$ edges. However, we will have, for instance, $e(T_{n}^{e}(q))=\left(1+O\left(1/n\right)\right)e$, and this is more than enough for
our asymptotic estimates.

Let $q \in [0,1]$.  The following graph $T=T_{n}^{e}(q)$, with (asymptotically) $n$ vertices and $e$ edges, will prove useful.
We partition the vertices of $T$ into three sets $Y_{T}$ (yellow), $R_{T}$ (red) and $B_{T}$ (blue), with the following sizes:
\begin{eqnarray}
|Y_{T}| &=& \sqrt{\beta}qn,  \nonumber\\
|R_{T}| &=& \left(1 - \sqrt{1 - \beta\left(1-q^{2}\right)}\right)n,\nonumber \\ 
|B_{T}| &=& \left(\sqrt{1 - \beta\left(1-q^{2}\right)} - \sqrt{\beta}q\right)n.\nonumber
\end{eqnarray}
The sets $Y_{T}$ and $R_{T}$ both span cliques, while $B_{T}$ is an independent set. Also, every vertex in $R_{T}$ is connected
to every vertex in $Y_{T}$ and $B_{T}$. It is easy to check that $T_{n}^{e}(q)$ has the required number of vertices and edges.
Moreover, we have that $T_{n}^{e}(0)  = S_{n}^{e}$ and $T_{n}^{e}(1) = K_{n}^{e}$, so that $T_{n}^{e}(q)$ interpolates between
$S_{n}^{e}$ and $K_{n}^{e}$. See Figure~\ref{Fig1} for a picture of $S_{n}^{e}$ and $T_{n}^{e}(q)$.
\begin{figure}[ht]
\centering
\includegraphics[scale = 0.8]{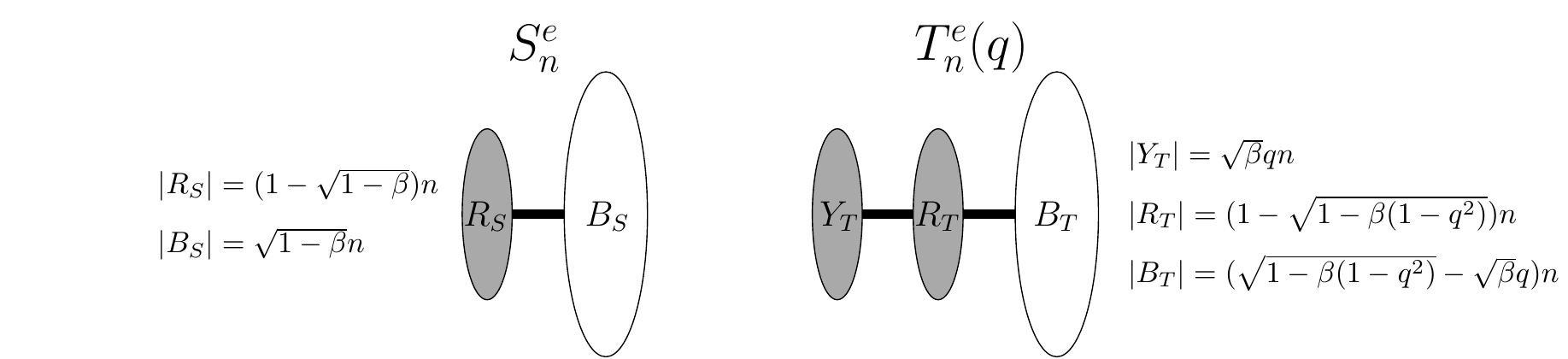}
\caption{The graphs $S_{n}^{e}$ and $T_{n}^{e}(q)$ where $e = \frac{\beta n^2}{2}$.  The shaded sets are cliques,
while the white sets are independent.  A black line between two sets indicates they are fully connected. }
\label{Fig1}
\end{figure}

\section{Main result}
In this section we prove the following theorem, which disproves the conjecture of Nagy.
\begin{theorem}\label{theorem: main theorem}
Let $G_{l}$ be a graph on $v$ vertices such that $\alpha^{*}(G_{l})> \max\left(\alpha\left(G_{l}\right),\frac{v}{2}\right)$.
Fix $q \in (0,1)$. Then there exists $\epsilon = \epsilon(G_{l},q) > 0$ such that, for all $\beta \in (0,\epsilon)$, we have
\begin{equation}
t\left(G_{l},T^{e}_{n}\left(q\right)\right) >\max\left(t\left(G_{l},K^{e}_{n}\right),t\left(G_{l},S^{e}_{n}\right)\right). \nonumber
\end{equation}
\end{theorem}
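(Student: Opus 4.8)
The plan is to reduce the theorem to a comparison of the leading-order behaviour, as $\beta\to 0^{+}$, of the three functions $\beta\mapsto t(G_{l},K^{e}_{n})$, $\beta\mapsto t(G_{l},S^{e}_{n})$ and $\beta\mapsto t(G_{l},T^{e}_{n}(q))$; each will turn out to be a positive constant times a power of $\beta$, and the hypothesis will force the exponent for $T^{e}_{n}(q)$ to be strictly the smallest. We may assume $G_{l}$ has no isolated vertices. As $K^{e}_{n}$ is asymptotically a clique on $\sqrt{\beta}\,n$ vertices together with isolated vertices and every vertex of $G_{l}$ has a neighbour, every homomorphism sends $G_{l}$ into that clique, so the number of such homomorphisms is $(1+o_{n}(1))(\sqrt{\beta}\,n)^{v}$ and $t(G_{l},K^{e}_{n})=\beta^{v/2}$. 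For $S^{e}_{n}$, a homomorphism $f\colon V(G_{l})\to V(S^{e}_{n})$ is determined, up to lower-order terms, by the set $B:=f^{-1}(B_{S})$ of vertices mapped to the independent side; since $B_{S}$ is independent, $B$ must be an independent set of $G_{l}$, and then
\[
t(G_{l},S^{e}_{n})=\sum_{B\text{ independent in }G_{l}}\bigl(1-\sqrt{1-\beta}\bigr)^{\,v-|B|}\bigl(\sqrt{1-\beta}\bigr)^{|B|}.
\]
As $\beta\to 0^{+}$ the dominant terms are those with $|B|=\alpha(G_{l})$, so $t(G_{l},S^{e}_{n})=\Theta\bigl(\beta^{\,v-\alpha(G_{l})}\bigr)$ with positive leading coefficient.

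For $T^{e}_{n}(q)$ the same idea applies with a three-way split. A homomorphism $f\colon V(G_{l})\to V(T^{e}_{n}(q))$ has a type $(P,Q,S):=\bigl(f^{-1}(R_{T}),f^{-1}(Y_{T}),f^{-1}(B_{T})\bigr)$, and, since $B_{T}$ is independent and there are no edges between $Y_{T}$ and $B_{T}$, such a type is realised precisely when $S$ is independent in $G_{l}$ and $N_{G_{l}}(S)\subseteq P$; call such an ordered partition \emph{valid}. Counting as before and using $|R_{T}|\sim\tfrac12(1-q^{2})\beta n$, $|Y_{T}|=q\sqrt{\beta}\,n$ and $|B_{T}|\sim n$, the contribution of a valid type $(P,Q,S)$ to $t(G_{l},T^{e}_{n}(q))$ is $(1+o_{\beta}(1))\bigl(\tfrac{1-q^{2}}{2}\bigr)^{|P|}q^{|Q|}\beta^{\,|P|+|Q|/2}$, whence $t(G_{l},T^{e}_{n}(q))=(C_{T}(q)+o(1))\,\beta^{\,m^{*}}$ as $\beta\to 0^{+}$, where
\[
m^{*}:=\min\bigl\{\,|P|+\tfrac12|Q|\;:\;(P,Q,S)\ \text{valid}\,\bigr\}
\]
and $C_{T}(q)>0$ collects the minimising terms (each positive as $q\in(0,1)$).

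The crux is to show $m^{*}=v-\alpha^{*}(G_{l})$, which I would do via the Nemhauser--Trotter theorem. For $m^{*}\le v-\alpha^{*}(G_{l})$: take a maximum fractional independence weighting $\phi$ with values in $\{0,\tfrac12,1\}$ and let $V_{1},V_{1/2},V_{0}$ be its level sets; then $V_{1}$ is independent and $N_{G_{l}}(V_{1})\subseteq V_{0}$, so $(V_{0},V_{1/2},V_{1})$ is valid and $|V_{0}|+\tfrac12|V_{1/2}|=v-\bigl(|V_{1}|+\tfrac12|V_{1/2}|\bigr)=v-\alpha^{*}(G_{l})$. For the reverse inequality, let $(P,Q,S)$ be any valid partition; the weighting equal to $1$ on $S$, to $0$ on $N_{G_{l}}(S)$ and to $\tfrac12$ elsewhere is a fractional independence weighting of value $\tfrac{v}{2}+\tfrac12\bigl(|S|-|N_{G_{l}}(S)|\bigr)$, so $|N_{G_{l}}(S)|\ge |S|+v-2\alpha^{*}(G_{l})$, and combining this with $|P|\ge|N_{G_{l}}(S)|$ and $|Q|=v-|P|-|S|$ gives $|P|+\tfrac12|Q|=\tfrac{v}{2}+\tfrac12(|P|-|S|)\ge v-\alpha^{*}(G_{l})$.

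Finally, the hypothesis $\alpha^{*}(G_{l})>\max(\alpha(G_{l}),v/2)$ yields $v-\alpha^{*}(G_{l})<v/2$ and $v-\alpha^{*}(G_{l})<v-\alpha(G_{l})$, so among the three functions, all of which are positive constants times powers of $\beta$, the one for $T^{e}_{n}(q)$ has the strictly smallest exponent. Hence $t(G_{l},T^{e}_{n}(q))/t(G_{l},K^{e}_{n})\to\infty$ and $t(G_{l},T^{e}_{n}(q))/t(G_{l},S^{e}_{n})\to\infty$ as $\beta\to 0^{+}$, which produces the required $\epsilon=\epsilon(G_{l},q)>0$. The only genuine obstacle is the middle step --- pinning down $m^{*}=v-\alpha^{*}(G_{l})$ via Nemhauser--Trotter; everything else is routine asymptotic bookkeeping, aided by the fact (noted in the excerpt) that the approximate graphs already have the correct vertex and edge counts up to factors $1+O(1/n)$.
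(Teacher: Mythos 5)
Your proof is correct and takes essentially the same approach as the paper: compute the leading $\beta$-exponent for each of $K^e_n$, $S^e_n$, $T^e_n(q)$ by decomposing homomorphisms according to which vertex class each vertex of $G_l$ is mapped to, and then observe that the hypothesis $\alpha^*(G_l)>\max(\alpha(G_l),v/2)$ forces the $T$-exponent to be strictly smallest. The only place where you diverge is in proving $m^*\ge v-\alpha^*(G_l)$: you build an auxiliary weighting supported on $S$, $N(S)$ and the rest, and then manipulate inequalities, whereas the paper's route is immediate once one notices that a valid ordered partition $(P,Q,S)$ \emph{is} a $\{0,\tfrac12,1\}$-valued fractional independence weighting $\phi$ (via $\phi|_P=0$, $\phi|_Q=\tfrac12$, $\phi|_S=1$), so that $|P|+\tfrac12|Q|=v-\sum_u\phi(u)\ge v-\alpha^*(G_l)$ directly from the definition of $\alpha^*$; Nemhauser--Trotter is then only used, exactly as you do, to realise equality.
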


We remark that there are infinitely many graphs $G$ that have $\alpha^{*}(G)> \max\left(\alpha\left(G\right),\frac{v}{2}\right)$.  For example, for $a \geqslant 3$, $b \geqslant 2$, consider the following graph that has $a+b+1$ vertices.  We start by taking a clique $K_{a}$, we then add a single vertex $u$ to our graph and connect it to one vertex of $K_{a}$.  We finally add $b$ more vertices to our graph, and connect them all to $u$.  It is easy to see that $\alpha^{*}(G) = b + \frac{a}{2}$, $\alpha(G) = b+1$ and $\frac{v}{2} = \frac{a+b+1}{2}$.  The smallest such graph occurs when $a = 3$ and $b = 2$.  We call this graph $G_{6}$, and we study it more carefully in Section \ref{subsection:an explicit counterexample}.

The proof of Theorem \ref{theorem: main theorem} will follow from the two homomorphism counting lemmas below; however we
first need to introduce some more notation.  Given a labelled graph $G_{l}$, let $\Phi(G_{l})$ be the set of fractional
independence weightings of $G_{l}$ in which every vertex receives a weight in $\{0,\frac{1}{2},1\}$.  Given $\phi \in\Phi(G_{l})$, let
\begin{align*}
R_{\phi} &= \Big\{w \in V(G): \phi(w) = 0\Big\}, \nonumber \\
Y_{\phi} &= \Big\{w \in V(G): \phi(w) = \frac{1}{2}\Big\}, \nonumber \\
B_{\phi} &= \Big\{w \in V(G): \phi(w) = 1\Big\}, \nonumber
\end{align*}
and let $r_{\phi} = |R_{\phi}|$, $y_{\phi} = |Y_{\phi}|$, and $b_{\phi} = |B_{\phi}|$.  For all $q \in [0,1]$, we define
\begin{align*}
y(q) &= \sqrt{\beta}q, \\ \nonumber
r(q) &= 1 - \sqrt{1 - \beta(1-q^2)},\\ \nonumber
b(q) &= \sqrt{1-\beta(1-q^2)} - \sqrt{\beta}q.\nonumber
\end{align*}

Recall that for any $q \in [0,1]$, the graph $T^{e}_{n}(q)$ has three vertex classes $Y_T, R_T$ and $B_T$, with
$|Y_{T}| = y(q)n$, $|R_{T}| = r(q)n$, and $|B_{T}| = b(q)n$, where $Y_{T}$ and $R_{T}$ both span cliques, $B_{T}$ is an
independent set, and every vertex in $R_{T}$ is connected to every vertex in $Y_{T}$ and $B_{T}$.

\begin{Lemma}\label{lemma: counting copies}
Let $G_{l}$ be a labelled graph, and fix $\beta,q \in [0,1]$. Then
\begin{equation}
t\left(G_{l},T^{e}_{n}\left(q\right)\right) = \sum_{\phi \in \Phi(G_{l})}y(q)^{y_{\phi}}r(q)^{r_{\phi}}b(q)^{b_{\phi}}. \nonumber
\end{equation}
\end{Lemma}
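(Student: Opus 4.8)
The plan is to compute the homomorphism density $t(G_l, T_n^e(q))$ directly by classifying all graph homomorphisms $f \colon V(G_l) \to V(T_n^e(q))$ according to which of the three colour classes $Y_T, R_T, B_T$ each vertex of $G_l$ is mapped into. Since $|Y_T| = y(q)n$, $|R_T| = r(q)n$, $|B_T| = b(q)n$, a ``type'' assignment $\sigma \colon V(G) \to \{Y, R, B\}$ with $y_\sigma$ vertices sent to $Y$, $r_\sigma$ to $R$, and $b_\sigma$ to $B$ contributes, to leading order, a factor $\bigl(y(q)n\bigr)^{y_\sigma}\bigl(r(q)n\bigr)^{r_\sigma}\bigl(b(q)n\bigr)^{b_\sigma}$ to $\mathrm{hom}(G_l, T_n^e(q))$ --- \emph{provided} $\sigma$ is ``feasible'', i.e.\ every such $\sigma$-respecting map actually sends edges of $G_l$ to edges of $T_n^e(q)$; infeasible types contribute $O(n^{v-1})$ or less and vanish after dividing by $n^v$. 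Dividing by $n^v$ and taking $n \to \infty$ then gives $t(G_l, T_n^e(q)) = \sum_{\sigma \text{ feasible}} y(q)^{y_\sigma} r(q)^{r_\sigma} b(q)^{b_\sigma}$, so the whole content of the lemma is the identification of the feasible types with the set $\Phi(G_l)$ of $\{0,\tfrac12,1\}$-valued fractional independence weightings, via $R_\phi \leftrightarrow Y$-class? --- no: via the correspondence $\phi \mapsto \sigma$ with $\sigma(w) = R$ if $\phi(w) = 0$, $\sigma(w) = Y$ if $\phi(w) = \tfrac12$, $\sigma(w) = B$ if $\phi(w) = 1$ (matching the notation $R_\phi, Y_\phi, B_\phi$ in the excerpt).

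The key step, then, is the combinatorial characterization: \emph{a type $\sigma$ is feasible if and only if the corresponding weighting $\phi_\sigma$ (assigning $0,\tfrac12,1$ according to whether $\sigma$ is $R,Y,B$) is a fractional independence weighting}, i.e.\ $\phi_\sigma(u) + \phi_\sigma(w) \le 1$ for every edge $\{u,w\} \in E(G_l)$. To see this I would unwind the adjacency structure of $T_n^e(q)$: within $Y_T$ all pairs are edges; within $R_T$ all pairs are edges; within $B_T$ no pairs are edges; all $R_T$–$Y_T$ and all $R_T$–$B_T$ pairs are edges; and no $Y_T$–$B_T$ pairs are edges. Hence for an edge $\{u,w\}$ of $G_l$, the map $\sigma$ can extend to an edge-preserving map on this edge (with the images free to range over large sets) exactly when $(\sigma(u), \sigma(w))$ is \emph{not} one of the forbidden patterns $\{B,B\}$ or $\{Y,B\}$ --- equivalently, when $\phi_\sigma(u) + \phi_\sigma(w) \le 1$, since the forbidden patterns are precisely those with weight-sum $> 1$ (namely $1+1$ and $\tfrac12 + 1$). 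Conversely, if no edge of $G_l$ maps to a forbidden pattern, every $\sigma$-respecting map is a homomorphism, so the type is feasible and contributes the full $\prod$-factor. This equivalence is exactly the bijection $\Phi(G_l) \leftrightarrow \{\text{feasible types}\}$, and substituting gives the claimed formula.

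The main obstacle --- really the only non-bookkeeping point --- is making the asymptotic counting rigorous, in particular justifying that infeasible types contribute $o(n^v)$ and that feasible types contribute $\bigl(1+o(1)\bigr)$ times the product of sizes. For a feasible type, the number of $\sigma$-respecting maps is exactly $\prod$ over colour classes $c$ of (size of class $c$)$^{(\#\text{vertices of }G\text{ in class }c)}$, and since each class has size $\Theta(n)$ (assuming $y(q), r(q), b(q) > 0$; the degenerate cases $q \in \{0,1\}$ where a class is empty are handled by noting such types simply drop out of the sum, consistent with $T_n^e(0) = S_n^e$, $T_n^e(1) = K_n^e$), dividing by $n^v$ and letting $n \to \infty$ yields the factor $y(q)^{y_\sigma} r(q)^{r_\sigma} b(q)^{b_\sigma}$ on the nose. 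For an infeasible type, \emph{every} $\sigma$-respecting map fails to be a homomorphism --- so it contributes $0$ to $\mathrm{hom}(G_l, T_n^e(q))$, not merely $o(n^v)$; thus there is in fact no error term from infeasible types at all, and the formula $t(G_l, T_n^e(q)) = \sum_{\phi \in \Phi(G_l)} y(q)^{y_\phi} r(q)^{r_\phi} b(q)^{b_\phi}$ holds. One should also remark that the statement is about $t$ defined via $\mathrm{hom}$ (the excerpt notes $N_l = (1+o(1))\mathrm{hom}$ and that both give the same $t$), so it suffices to work with homomorphism counts throughout.
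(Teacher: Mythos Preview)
Your proposal is correct and follows essentially the same approach as the paper: classify homomorphisms $f$ by the colour-class assignment they induce on $V(G_l)$, identify the feasible assignments with $\Phi(G_l)$ via the adjacency structure of $T_n^e(q)$ (the forbidden patterns $\{B,B\}$ and $\{Y,B\}$ being exactly those with weight-sum $>1$), and sum the resulting product contributions. If anything, you are more explicit than the paper about both directions of the feasibility $\leftrightarrow$ fractional-independence correspondence and about the degenerate endpoints $q\in\{0,1\}$; the one minor slip is the phrase ``the number of $\sigma$-respecting maps is exactly $\prod$'' for feasible $\sigma$, since non-injective maps collapsing an edge are not homomorphisms---but this is an $O(n^{v-1})$ defect that vanishes in the limit defining $t$, and the paper glosses over the same point.
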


\begin{Lemma}\label{lemma: vanishing beta}
Let $G_l$ be a labelled graph on $v$ vertices with no isolated vertices.  Fix $q \in (0,1)$, and let $\beta  \rightarrow 0$.
Then there exist constants $C_{1} = C_{1}(G_l,q)>0$ and $C_{2} = C_{2}(G_l)>0$ such that the following all hold:
\begin{enumerate}
\item $t\left(G_{l},K^{e}_{n}\right) = \beta^{\frac{v}{2}}$,
\item $t\left(G_l,T^{e}_{n}\left(q\right)\right) = C_{1}\left( \beta^{v-\alpha^{*}\left(G\right)}
+ O\left(\beta^{v-\alpha^{*}(G) + \frac{1}{2}}\right)\right)$,
\item $t\left(G_l,S^{e}_{n}\right) = C_{2}\left(\beta^{v-\alpha\left(G\right)}
+ O\left(\beta^{v-\alpha\left(G\right) + 1}\right)\right)$.
\end{enumerate}
\end{Lemma}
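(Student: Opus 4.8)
The plan is to establish the three asymptotic identities in Lemma~\ref{lemma: vanishing beta} by applying Lemma~\ref{lemma: counting copies} and then analysing which fractional independence weightings $\phi \in \Phi(G_l)$ dominate as $\beta \to 0$. First note that $y(q) = \sqrt{\beta}q = \Theta(\sqrt{\beta})$, while from the Taylor expansion $1 - \sqrt{1-x} = \tfrac{x}{2} + O(x^2)$ we get $r(q) = \tfrac{\beta(1-q^2)}{2} + O(\beta^2) = \Theta(\beta)$, and $b(q) = \sqrt{1-\beta(1-q^2)} - \sqrt{\beta}q = 1 - \Theta(\sqrt\beta)$, so $b(q) = \Theta(1)$. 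Hence each summand $y(q)^{y_\phi} r(q)^{r_\phi} b(q)^{b_\phi}$ has order $\beta^{y_\phi/2 + r_\phi}$, i.e.\ order $\beta^{v - \alpha^*_\phi}$ where I write $\alpha^*_\phi := \sum_{w} \phi(w) = b_\phi + \tfrac{1}{2} y_\phi$ for the total weight of $\phi$ (using $r_\phi + y_\phi + b_\phi = v$). So the exponent of $\beta$ in each term is controlled precisely by the total weight of the corresponding weighting, and the dominant terms are exactly those $\phi$ maximising $\sum_w \phi(w)$.

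For part 1, $K^e_n = T^e_n(1)$, so $y(1) = \sqrt\beta$, $r(1) = 1 - \sqrt{1-\beta \cdot 0}\,$... wait, more directly: for $q=1$ we have $b(1) = \sqrt{1} - \sqrt\beta = 1 - \sqrt\beta$ and $r(1) = 1 - 1 = 0$. Thus in $\sum_\phi y(1)^{y_\phi} r(1)^{r_\phi} b(1)^{b_\phi}$ only the weightings with $r_\phi = 0$ survive (with the convention $0^0 = 1$); but $\phi \equiv 0$ would need every edge to satisfy $0 + 0 \le 1$, fine, yet the question is which such $\phi$ exist with $r_\phi = 0$: every vertex gets weight $\tfrac12$ or $1$. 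Since $G_l$ has no isolated vertices, any edge $\{u,w\}$ forbids both endpoints having weight $1$; so the all-$\tfrac12$ weighting is the only one with $r_\phi = 0$ that is valid unless we are careful. Actually $\phi$ with some vertex of weight $1$ forces its neighbours to weight $0$, contradicting $r_\phi = 0$ (here using no isolated vertices). Hence the unique surviving term is $\phi \equiv \tfrac12$, giving $y(1)^v = (\sqrt\beta)^v = \beta^{v/2}$, which is part 1 exactly. For part 3, $S^e_n = T^e_n(0)$: then $y(0) = 0$, so only weightings with $y_\phi = 0$ survive, i.e.\ $\phi: V \to \{0,1\}$ — these are exactly indicator functions of independent sets. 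Writing $r(0) = 1 - \sqrt{1-\beta}$ and $b(0) = \sqrt{1-\beta}$, each such term is $(1-\sqrt{1-\beta})^{v - |I|}(\sqrt{1-\beta})^{|I|}$ for an independent set $I$; using $1 - \sqrt{1-\beta} = \tfrac\beta2 + O(\beta^2)$ and $\sqrt{1-\beta} = 1 + O(\beta)$, the dominant contribution comes from independent sets of maximum size $\alpha(G)$, and summing over them gives $C_2(\beta^{v-\alpha(G)} + O(\beta^{v - \alpha(G)+1}))$ with $C_2 = (\text{number of maximum independent sets}) \cdot 2^{-(v - \alpha(G))}$; note $C_2$ depends only on $G_l$, not $q$, as claimed.

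Part 2 is the substantive one. Here $q \in (0,1)$ is fixed, so $y(q), r(q), b(q)$ are all strictly positive and of orders $\Theta(\sqrt\beta), \Theta(\beta), \Theta(1)$ respectively. By the computation above, the term for $\phi$ is $\Theta(\beta^{v - \alpha^*_\phi})$, and by the Nemhauser--Trotter theorem quoted in the excerpt, $\max_{\phi \in \Phi(G_l)} \sum_w \phi(w) = \alpha^*(G_l)$ — crucially the maximum over the restricted class $\Phi(G_l)$ of half-integral weightings equals the full fractional independence number. So the leading order is $\beta^{v - \alpha^*(G)}$, contributed by the (finitely many) half-integral optimal weightings, and all other weightings contribute $O(\beta^{v - \alpha^*(G) + 1/2})$ since the total weight of any non-optimal half-integral weighting is at most $\alpha^*(G) - \tfrac12$. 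Extracting the leading constant $C_1$: for each optimal $\phi$, write $y(q) = \sqrt\beta\, q$, $r(q) = \tfrac{\beta(1-q^2)}{2}(1 + O(\beta))$, $b(q) = 1 + O(\sqrt\beta)$, so the leading term of that summand is $q^{y_\phi} \big(\tfrac{1-q^2}{2}\big)^{r_\phi} \beta^{y_\phi/2 + r_\phi} = q^{y_\phi}\big(\tfrac{1-q^2}{2}\big)^{r_\phi} \beta^{v - \alpha^*(G)}$, and $C_1 = \sum_{\phi \text{ optimal}} q^{y_\phi} \big(\tfrac{1-q^2}{2}\big)^{r_\phi} > 0$, visibly a function of $G_l$ and $q$ only.

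I expect the main obstacle to be bookkeeping the error terms uniformly: one must check that summing the $O(\cdot)$ corrections from the finitely many optimal $\phi$ together with the entire contribution of the non-optimal $\phi$ genuinely collapses into a single $O(\beta^{v - \alpha^*(G) + 1/2})$ term in part 2 (and analogously $O(\beta^{v - \alpha(G) + 1})$ in part 3, where the gap is a full power of $\beta$ because there $y_\phi = 0$ forces the weight to drop by an integer). This is routine but requires care with the convention $0^0 = 1$ for empty vertex classes and with the fact that $|\Phi(G_l)|$ is bounded by $3^v$, a constant. The only genuinely non-elementary input is the Nemhauser--Trotter half-integrality result, which guarantees that restricting to $\Phi(G_l)$ does not lower the optimum below $\alpha^*(G_l)$; everything else is Taylor expansion and the identity of Lemma~\ref{lemma: counting copies}.
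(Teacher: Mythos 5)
Your proposal is correct and takes essentially the same approach as the paper: apply Lemma~\ref{lemma: counting copies}, Taylor-expand $y(q)$, $r(q)$, $b(q)$, observe that each summand has order $\beta^{r_\phi + y_\phi/2} = \beta^{v - \sum_w \phi(w)}$, invoke Nemhauser--Trotter to identify the dominant half-integral weightings, and track the error terms (half-power gap when $q \in (0,1)$, full-power gap when $q = 0$ since the weights are then integral and $b(0) = 1 + O(\beta)$). The only small deviation is part 1, where you derive $t(G_l, K^e_n) = \beta^{v/2}$ by specialising Lemma~\ref{lemma: counting copies} to $q = 1$ and arguing that $\phi \equiv \tfrac12$ is the unique weighting with $r_\phi = 0$ when $G_l$ has no isolated vertices; the paper instead notes directly that any homomorphism into $K^e_n$ must land in the $\sqrt\beta n$-vertex clique. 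Both are correct, and yours has the minor aesthetic advantage of treating all three parts through the single lens of Lemma~\ref{lemma: counting copies}.
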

In Corollary \ref{corollary: constant calculations} below we give explicit values of the constants $C_{1}$ and $C_{2}$.  We remark that, just as with the constants $C_{1}$ and $C_{2}$, the constants hidden in the big $O$ notation in this lemma may depend on $G_{l}$ and $q$, but no other variables.
\begin{proof}[Proof of Lemma \ref{lemma: counting copies}]
Given a homomorphism $f$ from $G_{l}$ to $T_{n}^{e}(q)$, we give a weighting $\phi_f$ to the vertices of $G_{l}$ in the
following way:
\begin{equation}
\phi_{f}(u) = \begin{cases} 0 &\mbox{if } f(u) \in R_{T}, \\
  \frac{1}{2} &\mbox{if } f(u) \in Y_{T}, \\
1 &\mbox{if } f(u) \in B_{T}. \end{cases} \nonumber
\end{equation}
It is easy to see that $\phi_{f}$ is a fractional independence weighting of $G_{l}$.  Given $\phi \in\Phi(G_{l})$, let ${\rm hom}_{\phi}(G_{l},T_{n}^{e}(q))$
be the number of homomorphisms $f$ from $G_{l}$ to $T_{n}^{e}(q)$ such that $\phi_{f} = \phi$, and let
\begin{equation}
t_{\phi}(G_{l},T_{n}^{e}(q)) = \lim_{n \rightarrow \infty} \frac{{\rm hom}_{\phi}(G_{l},T_{n}^{e}(q))}{n^{v}}. \nonumber
\end{equation}
A homomorphism $f$ from $G_{l}$ to $T_{n}^{e}(q)$ has the property that $\phi_{f} = \phi$ if and only if
$f(R_{\phi}) \subseteq R_{T}$, $f(Y_{\phi}) \subseteq Y_{T}$, and $f(B_{\phi}) \subseteq B_{T}$.  Therefore
\begin{equation}\label{equation: weighting formula}
t_{\phi}\left(G_{l},T^{e}_{n}\left(q\right)\right) = y(q)^{y_{\phi}}r(q)^{r_{\phi}}b(q)^{b_{\phi}}.
\end{equation}
As each homomorphism from $G_{l}$ to $T_{n}^{e}(q)$ gives rise to a fractional independence weighting of $G_{l}$ as
described above, we have that
\begin{equation}\label{equation: label count to weighting}
t(G_{l},T_{n}^{e}(q)) = \sum_{\phi \in \Phi(G_{l})}t_{\phi}(G_{l},T_{n}^{e}(q)).
\end{equation}
Combining (\ref{equation: weighting formula}) with (\ref{equation: label count to weighting}) gives the result.
\end{proof}
\begin{proof}[Proof of Lemma \ref{lemma: vanishing beta}]
We start by proving the first part of the lemma.  The quasi-clique $K^{e}_{n}$ consists of a clique $X$ on $\sqrt{\beta}n$
vertices, and $(1-\sqrt{\beta})n$ isolated vertices. As $G_{l}$ has no isolated vertices, a map $f:V(G_{l}) \rightarrow V(K^{e}_{n})$ is a homomorphism from $G$ to $K^{e}_{n}$
if and only if $f\left(V\left(G_{l}\right)\right) \subseteq V(X)$, and so $t\left(G_{l},K^{e}_{n}\right) = \beta^{\frac{v}{2}}$,
as required.

We now proceed by proving the remaining two parts of the lemma.  Recall that the Taylor series for $1-\sqrt{1-x}$ about $0$ is
$\frac{x}{2} + O(x^{2})$.  Thus, as $\beta \rightarrow 0$, we have that
\begin{align*}
r(q) &=\frac{\beta\left(1-q^{2}\right)}{2} + O\left(\beta^{2}\right), \nonumber \\
b(q) &= 1 + O\left(\sqrt{\beta}\right).
\end{align*}
Therefore, combining this with (\ref{equation: weighting formula}) from the proof of Lemma \ref{lemma: counting copies},
we have that
\begin{equation}\label{equation: weighting formula vanishing beta}
t_{\phi}\left(G_{l},T^{e}_{n}\left(q\right)\right) =\left(\frac{1-q^{2}}{2}\right)^{r_{\phi}}q^{y_{\phi}}\left( \beta^{\left(r_{\phi}
+ \frac{y_{\phi}}{2}  \right)} + O \left(\beta^{\left(r_{\phi} + \frac{y_{\phi}}{2} + \frac{1}{2} \right)}\right) \right),
\end{equation}
for all $\phi \in\Phi(G_{l})$.  Suppose first that $q = 0$, so that we are counting homomorphisms into $S_{n}^{e}$.  In order for
(\ref{equation: weighting formula vanishing beta}) to not equal zero, we must have that $y_{\phi} = 0$, which corresponds
precisely to there being no vertex $u \in V(G_{l})$ such that $\phi(u) = \frac{1}{2}$.  In this case, we can rewrite
(\ref{equation: weighting formula vanishing beta}) as
\begin{equation}\label{equation: hom count q=0}
t_{\phi}(G_{l},S^{e}_{n}) = 2^{-r_{\phi}}\left( \beta^{r_{\phi} } + O \left(\beta^{\left(r_{\phi}  + 1 \right)}\right) \right).
\end{equation}
We remark that since $q =0$, we have $b(q) = 1 + O(\beta)$ rather than $b(q) = 1 + O(\sqrt{\beta})$, and so our correction
term in (\ref{equation: hom count q=0}) is an improvement over that in (\ref{equation: weighting formula vanishing beta}).
Among all $\phi \in \Phi(G_{l})$ such that $y_{\phi} = 0$, we have that $r_{\phi} \geqslant v - \alpha(G_{l})$, and equality
occurs if and only if  $\sum_{u \in V(G_{l})} \phi(u) =\alpha(G_{l})$. Let $C'_{2}$ be the number of $\phi \in \Phi(G_{l})$
such that $y_{\phi} = 0$ and $\sum_{u \in V(G_{l})} \phi(u) =\alpha(G_{l})$.  Then, combining (\ref{equation: hom count q=0})
with (\ref{equation: label count to weighting}) from the proof of Lemma \ref{lemma: counting copies}, we have that
\begin{equation}
t(G_{l},S^{e}_{n}) = 2^{\alpha\left(G_{l}\right)-v}C_{2}'\left(\beta^{v-\alpha\left(G_{l}\right)}
+ O\left(\beta^{v-\alpha\left(G_{l}\right) + 1}\right)\right), \nonumber
\end{equation}
completing the proof of the third part of the lemma.

To prove the second part of the lemma, we proceed in a similar fashion.  Fix $q \in (0,1)$.  For all $q$ in this range,
we have that $y(q),r(q),b(q)>0$.  For all $\phi \in \Phi(G_{l})$ we have that $r_{\phi} + \frac{y_{\phi}}{2} \geqslant v
- \alpha^{*}(G_{l})$, and equality occurs if and only if $\sum_{u \in V(G_{l})} \phi(u) =\alpha^{*}(G_{l})$.  Thus for a
suitable constant $C_{1}$, as in the previous case, we have that
\begin{equation}
t\left(G_{l},T^{e}_{n}\left(q\right) \right)= C_{1}\left( \beta^{v - \alpha^{*}(G_{l})  } + O \left(\beta^{v - \alpha^{*}
\left(G_{l}\right)+\frac{1}{2}}\right) \right), \nonumber
\end{equation}
as required.
\end{proof}

\begin{Coro}\label{corollary: constant calculations}
Let $\alpha = \alpha(G_{l})$ and $\alpha^{*} = \alpha^{*}(G_{l})$.  For each $0 \leqslant i \leqslant v - \alpha_{*}$,
let $\tilde{C_{i}}$ be the number of $\phi \in \Phi(G_{l})$ such that $\sum_{u \in V(G_{l})} \phi(u) =\alpha^{*}$ and
$r_{\phi} = i$.  Let $A(G_{l})$ be the number of independent sets $X$ in $G_{l}$ such that $|X| = \alpha$.  Then
\begin{equation}
C_{1} = \sum_{i = 0}^{v - \alpha_{*}} \tilde{C_{i}}\left(\frac{1-q^{2}}{2}\right)^{i}q^{2(v-\alpha^{*}-i)}, \nonumber
\end{equation}
and
\begin{equation}
C_{2} =2^{\alpha-v} A(G_{l}).\nonumber
\end{equation}
\end{Coro}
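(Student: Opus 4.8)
The plan is to pin down the two constants that were produced only abstractly in the proof of Lemma~\ref{lemma: vanishing beta}, by identifying precisely which fractional independence weightings contribute to the leading power of $\beta$ in each case.

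For $C_2$, recall that the proof of Lemma~\ref{lemma: vanishing beta} established $t(G_l,S^e_n)=2^{\alpha-v}C_2'\big(\beta^{v-\alpha}+O(\beta^{v-\alpha+1})\big)$, where $C_2'$ counts the $\phi\in\Phi(G_l)$ with $y_\phi=0$ and $\sum_{u\in V(G_l)}\phi(u)=\alpha$; hence $C_2=2^{\alpha-v}C_2'$ and it suffices to prove $C_2'=A(G_l)$. This is a short bijection. If $\phi\in\Phi(G_l)$ has $y_\phi=0$ then $\phi$ is $\{0,1\}$-valued, and its support $B_\phi=\{w:\phi(w)=1\}$ must be an independent set, because the fractional independence constraint $\phi(u)+\phi(w)\le 1$ on an edge $\{u,w\}$ forbids $\phi(u)=\phi(w)=1$; moreover $\sum_u\phi(u)=|B_\phi|$, so the extra condition $\sum_u\phi(u)=\alpha$ says exactly that $B_\phi$ is a maximum independent set. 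Conversely the indicator function of any independent set of size $\alpha$ is such a $\phi$. Thus $\phi\mapsto B_\phi$ is a bijection onto the family of maximum independent sets, giving $C_2'=A(G_l)$ and $C_2=2^{\alpha-v}A(G_l)$.

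For $C_1$, I would start from the exact formula $t_\phi(G_l,T^e_n(q))=y(q)^{y_\phi}r(q)^{r_\phi}b(q)^{b_\phi}$ of Lemma~\ref{lemma: counting copies} and the expansions $y(q)=\sqrt\beta\,q$, $r(q)=\tfrac{1-q^2}{2}\,\beta\,(1+O(\beta))$, $b(q)=1+O(\sqrt\beta)$ used in the proof of Lemma~\ref{lemma: vanishing beta}, which give $t_\phi(G_l,T^e_n(q))=\big(\tfrac{1-q^2}{2}\big)^{r_\phi}q^{y_\phi}\,\beta^{\,r_\phi+y_\phi/2}\,(1+O(\sqrt\beta))$, namely (\ref{equation: weighting formula vanishing beta}). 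The arithmetic identity underlying everything is $r_\phi+\tfrac{y_\phi}{2}=v-\sum_{u}\phi(u)$, which follows from $b_\phi=v-r_\phi-y_\phi$ together with $\sum_u\phi(u)=b_\phi+\tfrac{y_\phi}{2}$. Since $\sum_u\phi(u)\le\alpha^*$ for every weighting, with equality attained by some $\phi\in\Phi(G_l)$ by the theorem of Nemhauser and Trotter, and since $\sum_u\phi(u)$ always lies in $\tfrac12\Z$, the smallest power of $\beta$ occurring among the $t_\phi$ is $\beta^{v-\alpha^*}$, contributed exactly by those $\phi$ with $\sum_u\phi(u)=\alpha^*$; every other $\phi$ contributes at order $\beta^{v-\alpha^*+1/2}$ or smaller, and as $|\Phi(G_l)|\le 3^v$ all of these, together with the $O(\sqrt\beta)$ relative errors, are absorbed into the $O\big(\beta^{v-\alpha^*+1/2}\big)$ term of Lemma~\ref{lemma: vanishing beta}(2). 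Summing (\ref{equation: weighting formula vanishing beta}) over the extremal $\phi$ via (\ref{equation: label count to weighting}) and grouping by $i=r_\phi$ (for which $y_\phi=2(v-\alpha^*-i)$ is then forced, so $0\le i\le v-\alpha^*$) reads off the coefficient of $\beta^{v-\alpha^*}$ as $\sum_{i=0}^{v-\alpha^*}\tilde C_i\big(\tfrac{1-q^2}{2}\big)^i q^{2(v-\alpha^*-i)}$, which is the claimed value of $C_1$.

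There is essentially no obstacle beyond bookkeeping once Lemma~\ref{lemma: vanishing beta} is in hand: the only substantive points are the bijection that evaluates $C_2'$ and the identity $r_\phi+y_\phi/2=v-\sum_u\phi(u)$, which is what lets one both isolate the extremal weightings and recover $y_\phi$ from $r_\phi$. If anything needs care it is confirming that the $\tfrac12$-spacing of the possible values of $\sum_u\phi(u)$ really does push every non-extremal contribution into the $\beta^{v-\alpha^*+1/2}$ error, but this is immediate from the same identity.
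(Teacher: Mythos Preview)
Your proposal is correct and follows essentially the same approach as the paper: the bijection $\phi\mapsto B_\phi$ for $C_2'=A(G_l)$ is exactly the paper's argument, and your derivation of $C_1$ is a spelled-out version of what the paper summarises as ``follows directly from the proof of Lemma~\ref{lemma: vanishing beta}''. The identity $r_\phi+y_\phi/2=v-\sum_u\phi(u)$ and the grouping by $i=r_\phi$ that you make explicit are precisely the bookkeeping the paper leaves to the reader.
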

\begin{proof}
The calculation for $C_{1}$ follows directly from the proof of Lemma \ref{lemma: vanishing beta}, by taking care to
calculate the ``suitable constant" mentioned at the end of the proof.

To calculate $C_{2}$, we first note that if $\phi \in \Phi(G_{l})$ is such that $y_{\phi} = 0$ and $\sum_{u \in V(G_{l})}
\phi(u) =\alpha$, then $B_{\phi}$ is an independent set in $G_{l}$, and $b_{\phi} = \alpha(G)$.  On the other hand, given
an independent set $X \in V(G_{l})$, the weighting $\phi:V(G_{l}) \rightarrow \{0,1\}$, given by $\phi(u) = 1$ if and only if $u \in X$, is a fractional independence weighting of $G_{l}$ with $y_{\phi} = 0$ and $\sum_{u \in V(G_{l})} \phi(u) = |X|$.  Thus $C'_{2}$,
as defined in the proof of the theorem, is equal to $A(G_{l})$, and so the second part of the corollary follows.
\end{proof}

Theorem \ref{theorem: main theorem} follows immediately from Lemma \ref{lemma: vanishing beta}. Indeed, if
$\alpha^{*}(G)> \max\left(\alpha\left(G\right),\frac{v}{2}\right)$, and $q \in (0,1)$, then both $t(G_l,K^e_n)=o(t(G_l,T^e_n(q)))$ and
$t(G_l,S^e_n)=o(t(G_l,T^e_n(q)))$ as $\beta\to0$.

Another consequence of Lemma \ref{lemma: vanishing beta} is that, as $\beta\to0$, we have that $t(G_l,K^e_n)=o(t(G_l,S^e_n))$ if
$\alpha(G)>v/2$ and $t(G_l,S^e_n)=o(t(G_l,K^e_n))$ if $\alpha(G)<v/2$. When $\alpha(G)=v/2$, we may apply the following result,
which was proved independently by many people (see Cutler and Radcliffe~\cite{CR} for references and a short proof).

\begin{theorem}~\cite{CR}
If $G$ is a graph with $n$ vertices, $\alpha(G)\le\alpha$ and $0\le k\le n$, then
\[
i_k(G)\le i_k(K_{n_1}\cup K_{n_2}\cup\cdots\cup K_{n_{\alpha}}),
\]
where $\sum n_i=n,n_1\le n_2\le\cdots\le n_{\alpha}\le n_1+1$, and $i_k(G)$ denotes the number of independent sets of
size $k$ in $G$.
\end{theorem}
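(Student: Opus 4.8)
The plan is to pass to complements. Writing $\overline{G}$ for the complement, an independent set of size $k$ in $G$ is exactly a copy of $K_k$ in $\overline{G}$, so $i_k(G)=N(K_k,\overline{G})$; moreover $\alpha(G)=\omega(\overline{G})$, so the hypothesis $\alpha(G)\le\alpha$ says precisely that $\overline{G}$ is $K_{\alpha+1}$-free. Since the complement of $K_{n_1}\cup\cdots\cup K_{n_\alpha}$ is the complete $\alpha$-partite graph with part sizes $n_1,\dots,n_\alpha$, and the balanced such graph is the Tur\'an graph $T(n,\alpha)$, the theorem is equivalent to the statement that, among all $K_{\alpha+1}$-free graphs on $n$ vertices, $T(n,\alpha)$ maximises $N(K_k,\cdot)$ --- a classical theorem of Zykov. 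The usual proof is by Zykov symmetrisation (repeatedly replacing a vertex by a copy of a non-neighbour that lies in at least as many $k$-cliques), but I would instead argue by induction on $n$, which sidesteps the slightly delicate termination step of the symmetrisation.

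For the inductive step, fix a $K_{\alpha+1}$-free graph $H$ on $n$ vertices with $\alpha<n$ (the case $\alpha\ge n$ being trivial, since then $T(n,\alpha)=K_n$), and let $C$ be a maximum clique of $H$, so $|C|\le\alpha$ and every vertex of $H$ outside $C$ has a non-neighbour in $C$. Averaging over $C$, some $v\in C$ has at least $(n-|C|)/|C|\ge(n-\alpha)/\alpha$ non-neighbours, hence $\deg_H(v)\le n-\lceil n/\alpha\rceil$. Splitting the copies of $K_k$ in $H$ according to whether they use $v$ gives $N(K_k,H)=N(K_k,H-v)+N(K_{k-1},H[N(v)])$. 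Here $H-v$ is $K_{\alpha+1}$-free on $n-1$ vertices, while $H[N(v)]$ is $K_\alpha$-free (else it together with $v$ is a $K_{\alpha+1}$) on at most $n-\lceil n/\alpha\rceil$ vertices; so, applying the induction hypothesis to each and using that $N(K_{k-1},T(m,\alpha-1))$ is nondecreasing in $m$, we obtain $N(K_k,H)\le N(K_k,T(n-1,\alpha))+N(K_{k-1},T(n-\lceil n/\alpha\rceil,\alpha-1))$.

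It remains to recognise the right-hand side. Running the identical split in $T(n,\alpha)$ on a vertex $u$ in a part of maximum size $\lceil n/\alpha\rceil$, one checks that $T(n,\alpha)-u=T(n-1,\alpha)$ and that $T(n,\alpha)[N(u)]=T(n-\lceil n/\alpha\rceil,\alpha-1)$, so the bound above is exactly $N(K_k,T(n,\alpha))$. This closes the induction, and undoing the complementation proves the theorem. The step needing the most care is the choice of $v$: the two recursions line up only because $K_{\alpha+1}$-freeness forces the minimum degree of $H$ down to $n-\lceil n/\alpha\rceil$, which is precisely the degree of a largest-part vertex of the Tur\'an graph; the remaining ingredients --- the clique-splitting identity and the monotonicity of $N(K_{k-1},T(m,\alpha-1))$ in $m$ --- are routine.
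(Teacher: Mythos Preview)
Your argument is correct. Passing to complements translates the statement into Zykov's theorem that the Tur\'an graph $T(n,\alpha)$ maximises the number of $K_k$'s among $K_{\alpha+1}$-free graphs on $n$ vertices, and your induction on $n$ via the splitting identity
\[
N(K_k,H)=N(K_k,H-v)+N(K_{k-1},H[N(v)])
\]
is sound. The delicate point --- that some $v$ in a maximum clique $C$ has $\deg_H(v)\le n-\lceil n/\alpha\rceil$ --- does follow from your averaging over $C$ (each vertex outside $C$ has a non-neighbour in $C$, giving at least $n-|C|$ non-edges incident to $C$, hence some $v\in C$ with at least $\lceil (n-|C|)/|C|\rceil=\lceil n/|C|\rceil-1\ge\lceil n/\alpha\rceil-1$ non-neighbours), and the identifications $T(n,\alpha)-u=T(n-1,\alpha)$ and $T(n,\alpha)[N(u)]=T(n-\lceil n/\alpha\rceil,\alpha-1)$ for $u$ in a largest part are routine to verify. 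You should perhaps note that the induction is simultaneous over all $k$ and all $\alpha$ (since the inductive step invokes the hypothesis at $(\alpha-1,k-1)$ as well as at $(\alpha,k)$), and the degenerate case $\alpha=1$ is best handled directly rather than through the split.

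As for comparison with the paper: there is nothing to compare. The paper does not prove this theorem; it quotes it as a known result, attributing it to several authors and pointing to Cutler and Radcliffe~\cite{CR} for a short proof and references. Your approach, reducing to Zykov's theorem via complementation, is a perfectly standard route to this result.
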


\noindent Taking $\alpha=k=v/2$, we see that $i_k(G)\le i_k(K_2\cup K_2\cup\cdots\cup K_2)=2^{v/2}$, so that, in our notation,
$A(G_l)\le2^{v/2}$ when $\alpha(G)=v/2$. Together with Lemma \ref{lemma: vanishing beta} and Corollary
\ref{corollary: constant calculations}, this implies the following.

\begin{theorem} With asymptotic notation as $\beta\to0$,
\[
{\rm max}\{t(G_l,S_n^e),t(G_l,K_n^e)\}\sim t(G_l,K_n^e){\rm \ if\ and\ only\ if\ }\alpha(G)\le v/2.
\]
\end{theorem}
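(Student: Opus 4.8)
The plan is to combine the asymptotic expansions in Lemma~\ref{lemma: vanishing beta} with the bound $A(G_l)\le 2^{v/2}$ (which holds when $\alpha(G)=v/2$, by the Cutler--Radcliffe theorem applied with $\alpha=k=v/2$) and the explicit constants from Corollary~\ref{corollary: constant calculations}. Throughout, all asymptotics are as $\beta\to 0$, and we may assume $G$ has no isolated vertices: an isolated vertex of $G$ simply multiplies $N_l(G_l,H)$ by $n$ asymptotically for every $n$-vertex $H$, hence multiplies $t(G_l,H_n)$ by a common factor (indeed by Lemma~\ref{lemma: counting copies} it multiplies both sides by the same thing), and also changes $v$ and $\alpha(G)$ by the same amount while leaving the inequality $\alpha(G)\le v/2$ intact only after noting that an isolated vertex forces $\alpha(G)>v/2$; so in fact the statement is only interesting, and we only need to prove it, when $G$ has no isolated vertices and hence part~(1) of Lemma~\ref{lemma: vanishing beta} applies verbatim.

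The two directions are then a direct comparison of exponents. For the ``if'' direction, suppose $\alpha(G)\le v/2$. By part~(1) of Lemma~\ref{lemma: vanishing beta}, $t(G_l,K_n^e)=\beta^{v/2}$. If $\alpha(G)<v/2$, then part~(3) gives $t(G_l,S_n^e)=\Theta(\beta^{v-\alpha(G)})$ with $v-\alpha(G)>v/2$, so $t(G_l,S_n^e)=o(t(G_l,K_n^e))$ and the maximum is asymptotic to $t(G_l,K_n^e)$. If $\alpha(G)=v/2$, then part~(3) gives $t(G_l,S_n^e)=C_2(\beta^{v/2}+O(\beta^{v/2+1}))$ with, by Corollary~\ref{corollary: constant calculations}, $C_2=2^{\alpha-v}A(G_l)=2^{-v/2}A(G_l)\le 2^{-v/2}\cdot 2^{v/2}=1$. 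Hence $t(G_l,S_n^e)\le(1+o(1))\beta^{v/2}=(1+o(1))t(G_l,K_n^e)$, so again the maximum is asymptotic to $t(G_l,K_n^e)$. (One should also check $C_2>0$, i.e.\ $A(G_l)\ge 1$, which is immediate since any graph has an independent set of size $\alpha(G)$; this ensures the leading terms are genuinely of order $\beta^{v/2}$ and the ``$\sim$'' is meaningful.)

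For the ``only if'' direction, suppose $\alpha(G)>v/2$. Then $v-\alpha(G)<v/2$, so part~(3) of Lemma~\ref{lemma: vanishing beta} gives $t(G_l,S_n^e)=\Theta(\beta^{v-\alpha(G)})$, which dominates $t(G_l,K_n^e)=\beta^{v/2}$; that is, $t(G_l,K_n^e)=o(t(G_l,S_n^e))$, so $\max\{t(G_l,S_n^e),t(G_l,K_n^e)\}\sim t(G_l,S_n^e)\not\sim t(G_l,K_n^e)$. This establishes the contrapositive of the ``only if'' statement and completes the proof.

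The only genuinely nontrivial input is the inequality $C_2\le 1$ in the boundary case $\alpha(G)=v/2$, and that is precisely what the Cutler--Radcliffe bound $i_{v/2}(G)\le i_{v/2}(K_2\cup\cdots\cup K_2)=2^{v/2}$ supplies; everything else is bookkeeping with the exponents $v/2$, $v-\alpha(G)$, and $v-\alpha^*(G)$ already recorded in Lemma~\ref{lemma: vanishing beta}. The main place to be careful is the reduction to graphs without isolated vertices and the verification that the leading constants $C_2$ (and the $\beta^{v/2}$ coefficient $1$ for $K_n^e$) are strictly positive, so that the asymptotic equivalence ``$\sim$'' is being asserted between two quantities that are each $\Theta(\beta^{v/2})$ rather than between something and zero.
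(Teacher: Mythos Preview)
Your proof is correct and follows the paper's argument exactly: compare the exponents $v/2$ and $v-\alpha(G)$ from Lemma~\ref{lemma: vanishing beta} in the strict cases, and in the boundary case $\alpha(G)=v/2$ use the Cutler--Radcliffe bound $A(G_l)\le 2^{v/2}$ together with Corollary~\ref{corollary: constant calculations} to obtain $C_2\le 1$.

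The one paragraph that needs attention is the reduction to graphs without isolated vertices. The assertion ``an isolated vertex forces $\alpha(G)>v/2$'' is false (take $G=K_{10}\cup\{u\}$, where $\alpha(G)=2<11/2$), and in fact the theorem as literally stated fails for graphs with isolated vertices: for $G=K_2\cup\{u\}$ one has $\alpha(G)=2>3/2=v/2$, yet $t(G_l,K_n^e)=t(G_l,S_n^e)=\beta$, so the maximum is trivially $\sim t(G_l,K_n^e)$. The point is that removing an isolated vertex leaves both $t$-values unchanged but shifts $\alpha$ and $v$ by~$1$ while shifting $v/2$ by only~$1/2$, so the condition $\alpha(G)\le v/2$ is \emph{not} invariant under this operation. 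Since Lemma~\ref{lemma: vanishing beta} already carries the hypothesis ``no isolated vertices'', the theorem is to be read under that standing assumption; you should simply state this and delete the attempted reduction.
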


\subsection{An explicit counterexample}\label{subsection:an explicit counterexample}
Throughout this subsection, $G_{l}$ will be the (labelled) graph with $V(G_{l})=[6]$ and
\begin{equation}
E(G_{l}) = \big\{ \{1,2\},\{1,3\},\{2,3\},\{3,4\},\{4,5\},\{4,6\}\big\},\nonumber
\end{equation}
as in Figure \ref{Fig2}, and $T_{n}^{e}$ will be the graph $T_{n}^{e}(1/\sqrt{2})$.  Let $G_6$ be an unlabelled copy of $G_{l}$.
We will show that, for $\beta\in(0,0.016)$, the graph $T_{n}^{e}$ has many more copies of $G_6$ than either $K_{n}^{e}$ or $S_{n}^{e}$.

\begin{figure}[ht]
\centering
\includegraphics[scale = 0.6]{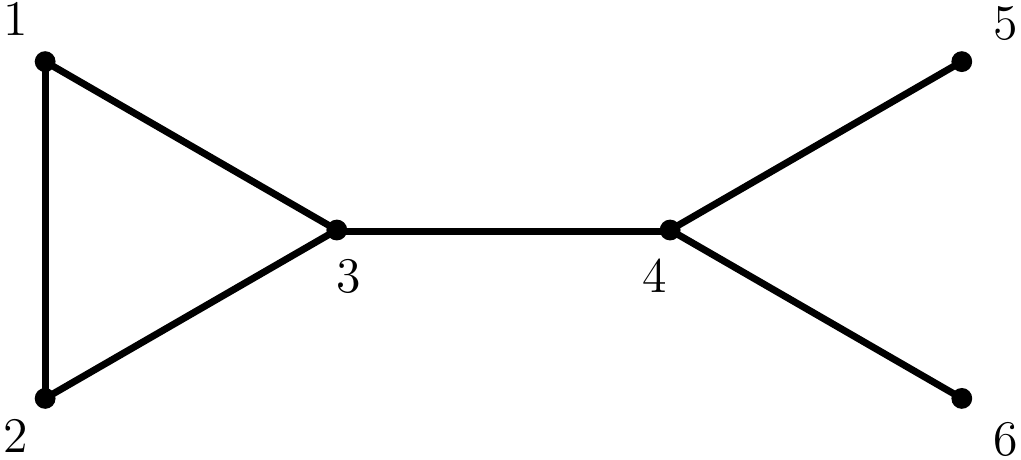}
\caption{The labelled graph $G_{l}$; the unlabelled version is $G_6$.}
\label{Fig2}
\end{figure}

\begin{theorem}
For $\beta \in (0,0.016)$, we have $t(G_{l},T_{n}^{e}) > t(G_{l},K_{n}^{e}) > t(G_{l},S_{n}^{e})$.
\end{theorem}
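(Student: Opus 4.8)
The plan is to apply Lemma~\ref{lemma: counting copies} and Corollary~\ref{corollary: constant calculations} directly to the explicit graph $G_l$, obtaining exact closed-form expressions (as functions of $\beta$) for $t(G_l,T_n^e)$, $t(G_l,K_n^e)$ and $t(G_l,S_n^e)$, and then comparing the three resulting functions on the interval $(0,0.016)$. First I would record the relevant parameters of $G_l$: it has $v=6$ vertices, $|\mathrm{Aut}\,G_l|$ is whatever it is (irrelevant for $t$), $\alpha(G_l)=3$ (e.g.\ $\{1,5,6\}$, and one checks no independent set of size $4$ exists), and $\alpha^*(G_l)=\tfrac72$ via the half-weighting putting $\tfrac12$ on the triangle $\{1,2,3\}$ and $\tfrac12$ on $\{4,5,6\}$ — wait, that gives $3$; the correct maximal weighting is weight $1$ on $\{5,6\}$ and weight $\tfrac12$ on $\{1,2,3,4\}$, giving $2+2=\tfrac72$. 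So $v-\tfrac v2=3$, $v-\alpha^*=\tfrac52$, $v-\alpha=3$. Thus Lemma~\ref{lemma: vanishing beta} gives $t(G_l,K_n^e)=\beta^3$ exactly, $t(G_l,S_n^e)=C_2\beta^3(1+O(\beta))$, and $t(G_l,T_n^e)=C_1\beta^{5/2}(1+O(\sqrt\beta))$; the real work is to get these with explicit error-free formulas, not just leading order.

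The cleanest route is to enumerate $\Phi(G_l)$, i.e.\ all fractional independence weightings of $G_l$ valued in $\{0,\tfrac12,1\}$. Since $\{1,2,3\}$ is a triangle, at most one of $1,2,3$ can get weight $1$ and if any gets $\tfrac12$ the others get $\le\tfrac12$; similarly the edges $\{3,4\},\{4,5\},\{4,6\}$ constrain vertex $4$ against $3,5,6$. This is a finite case analysis; I would organize it by the value assigned to vertices $3$ and $4$ (the two ``hubs''), which forces the rest. For the quasi-clique $K_n^e$ I do not even need this: $t(G_l,K_n^e)=\beta^{v/2}=\beta^3$ is immediate from Lemma~\ref{lemma: vanishing beta}(1) since $G_l$ has no isolated vertices. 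For $S_n^e = T_n^e(0)$, Lemma~\ref{lemma: counting copies} with $y(0)=0$, $r(0)=1-\sqrt{1-\beta}$, $b(0)=\sqrt{1-\beta}$ collapses the sum to weightings with $y_\phi=0$, i.e.\ ordinary $0/1$ independence-set indicator weightings, giving $t(G_l,S_n^e)=\sum_{k}i_k(G_l)(1-\sqrt{1-\beta})^{\,6-k}(\sqrt{1-\beta})^{\,k}$ where $i_k(G_l)$ counts independent sets of size $k$; I would compute the $i_k$ for $k=0,\dots,3$ by hand. For $T_n^e=T_n^e(1/\sqrt2)$ I plug $q^2=\tfrac12$ into $y(q)=\sqrt{\beta}/\sqrt2$, $r(q)=1-\sqrt{1-\beta/2}$, $b(q)=\sqrt{1-\beta/2}-\sqrt\beta/\sqrt2$ and sum $y(q)^{y_\phi}r(q)^{r_\phi}b(q)^{b_\phi}$ over the enumerated $\Phi(G_l)$.

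Having the three explicit functions $f_T(\beta), f_K(\beta)=\beta^3, f_S(\beta)$, the two inequalities reduce to elementary single-variable calculus on $(0,0.016)$. For $f_K>f_S$: since both are $\beta^3$ to leading order with $f_S = C_2\beta^3(1+O(\beta))$ and $C_2 = 2^{\alpha-v}A(G_l) = 2^{-3}i_3(G_l)$, I need $i_3(G_l)<8$; counting the independent $3$-sets of $G_l$ (they must avoid the triangle and the star at $4$) should give a number $\le 7$, and then one checks the lower-order terms do not overturn this on the whole interval by crude bounding — e.g.\ show $f_S(\beta)/\beta^3$ is increasing and stays below $1$ up to $\beta=0.016$. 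For $f_T>f_K$: here $f_T\sim C_1\beta^{5/2}$ dominates $\beta^3$ for small $\beta$, and the crossover happens at some $\beta_0$; I must verify $\beta_0>0.016$, which amounts to checking one inequality $f_T(0.016)>0.016^3$ numerically after bounding the $O(\sqrt\beta)$ tail rigorously (e.g.\ by truncating the Taylor expansions of the square roots with explicit remainder bounds valid for $\beta\le 0.016$). The main obstacle is precisely this last rigor step: turning the asymptotic ``$t(G_l,T_n^e)=C_1\beta^{5/2}(1+O(\sqrt\beta))$'' into a fully explicit inequality valid on all of $(0,0.016)$ rather than just ``for $\beta$ small'', which requires carrying honest error terms through the square-root expansions and the finite sum over $\Phi(G_l)$; everything else is bookkeeping.
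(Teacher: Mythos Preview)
Your approach is essentially identical to the paper's: enumerate $\Phi(G_l)$, apply Lemma~\ref{lemma: counting copies} to obtain explicit closed-form expressions in $y(q),r(q),b(q)$ for the three homomorphism densities, and then compare the resulting functions of $\beta$ on $(0,0.016)$. One slip to fix: your claimed $\alpha^*$-realising weighting (weight $1$ on $\{5,6\}$ and $\tfrac12$ on $\{1,2,3,4\}$) is invalid since it violates the constraints on the edges $\{4,5\}$ and $\{4,6\}$; the correct optimal weighting puts $0$ on vertex $4$, $\tfrac12$ on $\{1,2,3\}$, and $1$ on $\{5,6\}$, still giving $\alpha^*=7/2$ --- and your subsequent exponents are unaffected. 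If anything, your plan to carry explicit error terms through the square-root expansions is more careful than the paper, which at the final step simply plots the three functions and reads off the inequalities from the picture.
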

\begin{proof}
First, as in the proof of Lemma \ref{lemma: vanishing beta}, it is easy to see that $t(G_{l},K_{n}^{e})=\beta^{3}$.  We next turn
to calculating $t(G_{l},S_{n}^{e})$.  Recall that $S_{n}^{e} = T_{n}^{e}(0)$, and that $y(0) = 0, r(0) = 1-\sqrt{1-\beta}$ and
$b(0) = \sqrt{1-\beta}$.  By Lemma \ref{lemma: counting copies}, we have that
\begin{equation}\label{equation: example 1}
t(G_{l},S_{n}^{e}) = \sum_{\phi \in \Phi'(G_{l})}r(0)^{r_{\phi}}b(0)^{b_{\phi}},
\end{equation}
where $\Phi'(G_{l})$ is the set of fractional weightings of $G$ in which every vertex receives weight $0$ or $1$.  Given such a
fractional weighting $\phi$, let $c_{\phi}$ be a colouring of the vertices of $G$ where vertices such that $\phi(u) = 0$ are coloured
red, and vertices with $\phi(u) = 1$ are coloured blue. In Figure \ref{Fig3} below, we classify the elements of $\Phi'(G_{l})$ by
the number of blue vertices in their corresponding colourings.  Note that no such colouring can have more than $\alpha(G_{l}) = 3$
blue vertices.

\begin{figure}[ht]
\centering
\includegraphics[scale = 0.8]{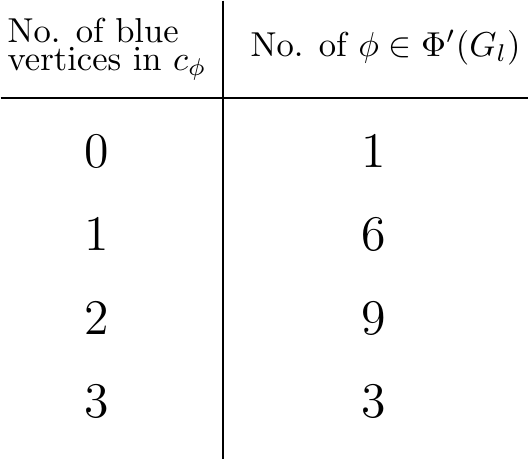}
\caption{The number of $\phi \in \Phi'(G_{l})$ whose colouring has a given number of blue vertices.}
\label{Fig3}
\end{figure}

\noindent From Figure \ref{Fig3} and (\ref{equation: example 1}) we see that
\begin{equation}\label{eq1}
t(G_{l},S_{n}^{e}) = r^{6} + 6r^{5}b + 9r^{4}b^{2} + 3r^{3}b^{3}=r^{3}\left(6r^{2}b + 9rb^{2} + 3b^{3}\right),
\end{equation}
where $r = r(0)$ and $b = b(0)$.

To calculate $t(G_{l},T_{n}^{e})$, we now let
\begin{align*}
y &= y\left(\frac{1}{\sqrt{2}}\right) = \sqrt{\frac{\beta}{2}}, \\
r &= r\left(\frac{1}{\sqrt{2}}\right) = 1-\sqrt{1-\frac{\beta}{2}},\\
b &= b\left(\frac{1}{\sqrt{2}}\right) = \sqrt{1-\frac{\beta}{2}} - \sqrt{\frac{\beta}{2}}.
\end{align*}
In a similar fashion to the above, we need to classify all $\phi \in \Phi(G_{l})$.  Given $\phi \in \Phi(G_{l})$, let $c_{\phi}$
be a colouring of the vertices of $G$ where vertices such that $\phi(u) = \frac{1}{2}$ are coloured yellow, vertices such that
$\phi(u) = 0$ are coloured red, and vertices with $\phi(u) = 1$ are coloured blue.  Again, as above, one can list all $\phi \in
\Phi(G_{l})$ by keeping track of how many red and blue vertices the colouring $c_{\phi}$ has.  We omit the table listing these
colourings, but as before we see that
\begin{eqnarray}\label{eq2}
t(G_{l},T_{n}^{e}) &=& (y + r)^{6} + 2(y + r)^{3}r^{2}b + 2 (y + r)^{2}r^{3}b + 2(y + r)^{4}rb \nonumber \\
&+& 2r^{4}b^{2} + 6(y+r)r^3b^2 + (y + r)^{3}rb ^{2} + 3r^{3}b^{3}.
\end{eqnarray}

Plotting a graph of the functions $t(G_{l},T_{n}^{e}), t(G_{l},K_{n}^{e})$ and  $t(G_{l},S_{n}^{e})$ gives the result; see Figure
\ref{Fig4} for a picture of these three functions for $\beta\in[0,0.016]$. We remind the reader that $e = \frac{\beta n^2}{2}$,
and also that the quantities $r$ and $b$ are different in equations (\ref{eq1}) and (\ref{eq2}). Namely they are $r(0)$ and $b(0)$,
and $r(1/\sqrt2)$ and $b(1/\sqrt2)$ respectively.

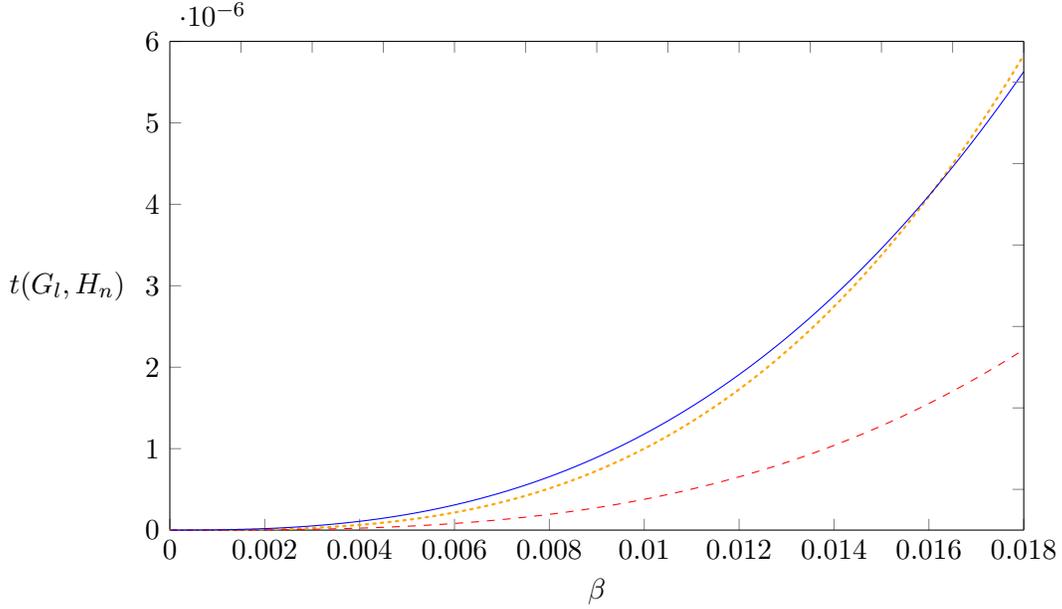
\begin{figure}[ht]
\centering
\definecolor{dyellow}{rgb}{1.0, 0.65, 0.0}
\begin{tikzpicture}
\begin{axis}[
xticklabel style={
/pgf/number format/fixed,
/pgf/number format/precision=5
}, scaled x ticks=false,
ylabel style={rotate=-90},
axis lines* = left,
xmin=0,xmax=0.018,
ymin=0,ymax=0.000006,
xlabel = $\beta$,
ylabel = {$t(G_{l},H_{n})$},
legend pos=north west,
width=0.8\textwidth,
height=0.5\textwidth,
]
\addplot [
domain=0:0.018, 
samples=100, 
color=dyellow,
style=dotted,
line width = 0.3mm,
line cap = round
]
{x^3};
\addplot [
domain=0:0.018, 
samples=100, 
color=red,
style=dashed,
]
{((x/2  + (x^2)/8)^3)*(6*((x/2  + (x^2)/8)^2)*(1-x/2  - (x^2)/8) + 9*(x/2  + (x^2)/8)*((1-x/2  - (x^2)/8)^2) + 3*((1-x/2  - (x^2)/8)^3))};
\addplot [
domain=0:0.018, 
samples=100, 
color=blue,
]
{(((x/2)^(0.5)) + (x/4 + (x^2)/32))^6 + 2*((((x/2)^(0.5)) + (x/4 + (x^2)/32))^3)*((x/4 + (x^2)/32)^2)*(1-x/4 - (x^2)/32 - (x/2)^(0.5)) + 2*((((x/2)^(0.5)) + (x/4 + (x^2)/32))^2)*((x/4 + (x^2)/32)^3)*(1-x/4 - (x^2)/32 - (x/2)^(0.5)) + 2*((((x/2)^(0.5)) + (x/4 + (x^2)/32))^4)*(x/4 + (x^2)/32)*(1-x/4 - (x^2)/32 - (x/2)^(0.5)) + 2*((x/4 + (x^2)/32)^4)*((1-x/4 - (x^2)/32 - (x/2)^(0.5))^2) + 6*(((x/2)^(0.5)) + (x/4 + (x^2)/32))*((x/4 + (x^2)/32)^3)*((1-x/4 - (x^2)/32 - (x/2)^(0.5))^2) + ((((x/2)^(0.5)) + (x/4 + (x^2)/32))^3)*(x/4 + (x^2)/32)*((1-x/4 - (x^2)/32 - (x/2)^(0.5))^2) + 3*((x/4 + (x^2)/32)^3)*((1-x/4 - (x^2)/32 - (x/2)^(0.5))^3)};
\end{axis} 
\begin{axis}[
axis lines* = right,
legend pos=north west,
xticklabels={,,},
yticklabels={,,},
width=0.8\textwidth,
height=0.5\textwidth,
]
\end{axis} 
\end{tikzpicture}
\caption{A graph comparing the functions $t(G_{l},K_{n}^{e})$, $t(G_{l},S_{n}^{e})$ and $t(G_{l},T_{n}^{e})$ on the interval $\beta \in [0,0.018]$.
The blue solid line is the function $t(G_{l},T_{n}^{e})$, the yellow dotted line is the function $t(G_{l},K_{n}^{e})$, and the red dashed line is the function
$t(G_{l},S_{n}^{e})$.  We have that $t(G_{l},K_{n}^{e}) = t(G_{l},T_{n}^{e})$ when $\beta \approx 0.01613474$.}\label{Fig4}
\end{figure}
\end{proof}

Note that we do not claim that $T^{e}_{n}$ is the maximiser for the graph $G_{6}$. We have only shown that there exists $\beta$ such that the maximiser for $G_{6}$ at edge density $\beta$ is neither $K^{e}_{n}$  nor  $S^{e}_{n}$.  Nonetheless, we do believe that, for
all graphs $G$, and for all edge densities $\beta \in [0,1]$, some graph family $H_{n}=T^{e}_{n}(q)$ is the maximiser. We refer the reader to
Section \ref{section: conjectures} for further details on this.

\section{The random connection}

With some effort, a counterexample to Nagy's conjecture can also be read out of some previous results of Janson, Oleszkiewicz and
Ruci\'{n}ski~\cite{JOR}. (We discovered this paper only after we had proved Theorem \ref{theorem: main theorem}.) As part of their
celebrated study on the upper tail for subgraph counts in random graphs, Janson, Oleszkiewicz and Ruci\'{n}ski proved the following
(in our notation).

\begin{theorem}\label{theorem: Janson}
Let $G$ be a graph on $v$ vertices with fractional independence number $\alpha^*(G)$. Then, with $\beta=2e/n^2$,
\[
N(n,e,G)=\Theta(n^v\beta^{v-\alpha^*(G)}).
\]
\end{theorem}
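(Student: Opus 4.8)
The plan is to prove matching lower and upper bounds, the lower one coming for free from the graphs $T^e_n(q)$ that are already at our disposal and the upper one from an entropy (Shearer) argument. Throughout $\beta=2e/n^2$, and we may assume $G$ has no isolated vertices, since stripping one off multiplies both $N(n,e,G)$ and $n^v\beta^{v-\alpha^*(G)}$ by $\Theta(n)$. For the lower bound it suffices to exhibit, for each $\beta$, one family on $n$ vertices with at most $e$ edges that has $\Omega_G(n^v\beta^{v-\alpha^*(G)})$ copies of $G$. Fix any $q\in(0,1)$. By Lemma~\ref{lemma: counting copies} together with Lemma~\ref{lemma: vanishing beta}(2) we have $t(G_l,T^e_n(q))=C_1\beta^{v-\alpha^*(G)}(1+o(1))$ as $\beta\to 0$, with $C_1=C_1(G,q)>0$, and since $N(G,T^e_n(q))=(1+o(1))n^v\,t(G_l,T^e_n(q))/|{\rm Aut}\,G|$, the family $T^e_n(q)$ already gives $\Omega_G(n^v\beta^{v-\alpha^*(G)})$ copies for all sufficiently small $\beta$. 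For $\beta$ bounded away from $0$ use the quasi-clique instead: by Lemma~\ref{lemma: vanishing beta}(1), $N(G,K^e_n)=\Theta_G(n^v\beta^{v/2})$, which for $\beta\ge\beta_0>0$ is $\Omega_{G,\beta_0}(n^v)\ge\Omega_{G,\beta_0}(n^v\beta^{v-\alpha^*(G)})$ because $\beta^{v-\alpha^*(G)}\le 1$. (If $\beta$ is allowed to tend to $0$ with $n$, one verifies the copy count in $T^e_n(q)$ directly rather than through the limit $t$; this is routine.)

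The substance is the upper bound, and I claim in fact that $N(G,H)\le n^v\beta^{v-\alpha^*(G)}$ for \emph{every} $H$ on $n$ vertices with at most $e$ edges, with no hidden constant. Since $N(G,H)\le N_l(G_l,H)\le{\rm hom}(G_l,H)$, it is enough to bound the number of homomorphisms. Assume ${\rm hom}(G_l,H)\ge 1$ and let $f\colon V(G)\to V(H)$ be a uniformly random homomorphism, so that $\mathbb{H}(f)=\log_2{\rm hom}(G_l,H)$ where $\mathbb{H}$ denotes binary entropy. Let $x=(x_{uw})_{uw\in E(G)}$ be a maximum fractional matching of $G$, so $\sum_{uw\in E(G)}x_{uw}=\nu^*(G)$ and $\sum_{w:uw\in E(G)}x_{uw}\le 1$ for each $u$, and set $y_u=1-\sum_{w:uw\in E(G)}x_{uw}\ge 0$. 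The edge-pairs $\{u,w\}$ (weights $x_{uw}$) together with the singletons $\{u\}$ (weights $y_u$) cover every vertex of $G$ with total weight exactly $1$, so Shearer's entropy lemma gives
\[
\mathbb{H}(f)\;\le\;\sum_{uw\in E(G)}x_{uw}\,\mathbb{H}\bigl(f(u),f(w)\bigr)\;+\;\sum_{u\in V(G)}y_u\,\mathbb{H}\bigl(f(u)\bigr).
\]
Now $\mathbb{H}(f(u))\le\log_2 n$, while $(f(u),f(w))$ is an ordered pair of adjacent vertices of $H$ (the endpoints of an edge of $G$ cannot be identified, as $H$ is simple), so $\mathbb{H}(f(u),f(w))\le\log_2(2e(H))\le\log_2(\beta n^2)$. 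Using $\sum_u y_u=v-2\sum_{uw}x_{uw}=v-2\nu^*(G)$ we obtain $\mathbb{H}(f)\le\nu^*(G)\log_2(\beta n^2)+(v-2\nu^*(G))\log_2 n$.

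To finish, recall the LP-duality identities $\nu^*(G)=\tau^*(G)$ (fractional matching number equals fractional vertex-cover number) and $\tau^*(G)+\alpha^*(G)=v$; hence $\nu^*(G)=v-\alpha^*(G)$, and in particular $v-2\nu^*(G)=2\alpha^*(G)-v\ge 0$, which is exactly why the weights $y_u$ above are legitimately nonnegative. Substituting $\nu^*(G)=v-\alpha^*(G)$ collapses the last bound to $\mathbb{H}(f)\le(v-\alpha^*(G))\log_2\beta+v\log_2 n=\log_2\bigl(\beta^{v-\alpha^*(G)}n^v\bigr)$, i.e.\ ${\rm hom}(G_l,H)\le\beta^{v-\alpha^*(G)}n^v$. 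Combined with the lower bound, this yields $N(n,e,G)=\Theta(n^v\beta^{v-\alpha^*(G)})$.

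The one point that needs care — and the reason the exponent is $v-\alpha^*(G)$ rather than something cruder — is the choice of weighting fed into Shearer's lemma: it must be the incidence vector of a fractional \emph{matching}, and the singleton weights must be exactly the complementary slacks $y_u$. The pure fractional-edge-cover bound ${\rm hom}(G_l,H)\le(2e)^{\rho^*(G)}=(2e)^{\alpha^*(G)}$ overshoots by a factor $(\beta n)^{2\alpha^*(G)-v}$ whenever $\alpha^*(G)>v/2$ (which is precisely the interesting range), and a naive "integer vertex cover plus degree counting" estimate both gives the wrong exponent and runs into König–Egerváry type obstructions for non-bipartite $G$; it is the entropy weighting that resolves this cleanly. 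Everything else is bookkeeping.
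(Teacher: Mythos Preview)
Your proof is correct, and in outline it parallels the paper's sketch: a blow-up/$T^e_n(q)$ construction for the lower bound, and Shearer plus LP duality for the upper bound. The execution of the upper bound is genuinely different, however. The paper (following Janson--Oleszkiewicz--Ruci\'nski) first uses a random $v$-partition of $V(H)$ to reduce to \emph{compatible} copies of $G$ at a cost of $v^{v}$, and then applies the set-form of Shearer's lemma to the hypergraph of compatible copies, with cover sets $B_u$ and $B_u\cup B_w$ weighted by a solution of the \emph{dual} LP~(\ref{equation: dual}). You instead apply the entropy form of Shearer directly to a uniformly random homomorphism, with the cover coming from a maximum fractional \emph{matching} together with its slacks; LP duality $\nu^*(G)=v-\alpha^*(G)$ then collapses the bound. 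Your route avoids the partition step entirely and yields the sharper constant ${\rm hom}(G_l,H)\le n^v\beta^{v-\alpha^*(G)}$, whereas the paper's sketch ends at $v^{v}\,n^v(\beta/2)^{v-\alpha^*(G)}$; both are of course $\Theta(n^v\beta^{v-\alpha^*(G)})$.

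One small expository slip: the nonnegativity of the individual slacks $y_u$ follows from the fractional-matching constraints $\sum_{w}x_{uw}\le 1$, not from the global inequality $\sum_u y_u=2\alpha^*(G)-v\ge 0$ you mention afterwards; the latter is only a consistency check. This does not affect the argument.
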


Since it is easy to see that $N(G,K^e_n)=\Theta(n^v\beta^{v/2})$ and $N(G,S^e_n)=\Theta(n^v\beta^{v-\alpha(G)})$, Theorem \ref{theorem: Janson} by
itself shows that neither the quasi-clique nor the quasi-star asymptotically maximises $N(G,H)$, at sufficiently small edge density
$\beta$, if $\alpha^*(G)>\max(\alpha(G),v/2)$. To disprove Nagy's conjecture, one only has to exhibit a single graph satisfying the
last condition (for instance, $G_6$ - there is no such graph on five or fewer vertices).

It is worth describing the lower bound construction in~\cite{JOR}, and its relationship to the graphs $T^e_n(q)$.
(The construction in~\cite{JOR} is expressed in terms of the solution to a linear program; we rephrase it in our notation.)
Given a graph $G$ on $v$ vertices with fractional independence number $\alpha^*(G)$, let $\phi$ be a weighting of $V(G)$
realizing $\alpha^*(G)$. As we've already remarked, we may assume that $\phi$ takes values in $\{0,\frac{1}{2},1\}$. Let $c$
be a sufficiently small constant. For each vertex $u\in V(G)$, we ```blow up" $u$ to an independent set $B_u$ of size
$cn\beta^{1-\phi(u)}$. For each edge $\{u,w\}\in E(G)$, we put a complete bipartite graph (containing $c^2n^2\beta^
{2-\phi(u)-\phi(v)}\le c^2n^2\beta$ edges) between $B_u$ and $B_v$. Call the resulting graph $H$. For sufficiently small
$c$, the graph $H$ has at most $n$ vertices and at most $|E(G)|c^2n^2\beta\le \beta n^2/2$ edges. Moreover, $H$ contains $\prod_u
cn\beta^{1-\phi(u)}=c^vn^v\beta^{v-\alpha^*(G)}$ copies of $G$.

Janson, Oleszkiewicz and Ruci\'{n}ski made no attempt to optimise the constant, and indeed it is not hard to see that one
can improve on their construction by making $H[B_u]$ a clique whenever $\phi(u)\not=1$, and adjusting the sizes of the $B_u$.
In other words, we amalgamate those $B_u$ for which $\phi(u)$ is constant, to get just three sets $B_0,B_1$ and $B_{1/2}$,
``fill in" $B_0$ and $B_{1/2}$ with cliques, and impose the conditions $|H|=n$ and $|E(H)|=e$ while keeping
$|B_i|=\Theta(n\beta^{1-i})$. The result of doing this is just the graph $T^e_n(q)$; the sets $B_0,B_{1/2}$ and $B_1$ are just $R_T,Y_T$
and $B_T$ respectively. Thus, for the lower bound, one only needs to consider a one-parameter family $T^e_n(q)$, instead of
a separate construction for each $G$, and, moreover, this family $T^e_n(q)$ simply consists of graphons with at most three
``steps". We conjecture in the next section that some $T^e_n(q)$ is always asymptotically optimal.

For completeness, we sketch the proof of the upper bound from~\cite{JOR}. To do this, we first re-examine the lower bound construction,
where each vertex $u$ in the small graph $G$ is ``blown up" to an independent set $B_u$ of size $cn^{x_u}$, for some $0\le x_u\le 1$,
and in which the sought-after copies of $G$ are {\it compatible} with the partition $(B_u)_{u\in V(G)}$, i.e., we only look for
copies of $G$ where each $u\in V(G)$ is located in $B_u$. Now, a simple random argument~\cite{FK,JOR} shows that, with
$G$ fixed and $|G|=v$, any large graph $H$ has a vertex partition $V(H)=\bigcup_{u\in V(G)} B_u$ in which at least $v^{-v}N(G,H)$
of the $N(G,H)$ copies of $G$ in $H$ are compatible (in the above sense) with the partition. So it is enough to show,
given a graph $H$, together with a partition of $V(H)$ into $v=|V(G)|$ parts, labelled with the vertices of $G$, that $H$
contains at most $\Theta(n^v\beta^{v-\alpha^*(G)})$ {\it compatible} copies of $G$.

Fixing $G$, and given a partition of $V(H)$, we now aim to choose the edges of $H$ so as to maximise the number $N^c(G,H)$ of
compatible copies of $G$ in $H$. Clearly, if there is no edge from $u$ to $w$ in $G$, we should not put any edges between $B_u$
and $B_w$ in $H$. For edges $\{u,w\}$ of $G$, if we make $E(B_u,B_w)$ a complete bipartite graph, we have exactly the lower bound
construction. The question remains: can we increase $N^c(G,H)$ by increasing the sizes of the parts $B_u$, while
thinning out the edge sets $E(B_u,B_w)$ for $\{u,w\}\in E(G)$? It turns out that the answer is no.

To see this, we again revisit the lower bound construction, in which the parts $B_u$ have sizes $cn^{x_u}$, where
the vertex weights $x_u$ comprise a solution to the following linear program.
\begin{flalign}\label{equation: primal}
&{\rm\bf Maximise\ \ }\sum_u x_u{\rm\bf \ \ \ subject\ to\ \ \ }0\le x_u\le 1{\rm \ and\ \ }uw\in E(G)\Rightarrow x_u+x_v\le 2-\epsilon.
\end{flalign}
Here, $\epsilon=-\log\left(\beta/2\right)/\log n$, so that $\beta/2=n^{-\epsilon}$. Given a weighting $\phi$ of $V(G)$ realizing $\alpha^*(G)$,
a solution to (\ref{equation: primal}) can be obtained by setting
\begin{equation}\label{equation: connection}
x_u=1-\epsilon(1-\phi(u)).
\end{equation}
The dual program is to find nonnegative edge weights $y_{uw}$ and vertex weights $z_u$ of $G$ as below.
\begin{equation}\label{equation: dual}
{\rm\bf Minimise\ \ }\sum_u z_u+(2-\epsilon)\sum_{uw\in E(G)}y_{uw}{\rm\bf \ \ \ subject\ to\ \ \ }u\in V(G)\Rightarrow
z_u+\sum_{uw\in E(G)} y_{uw}\ge 1.
\end{equation}
By linear programming duality, the minimum in (\ref{equation: dual}) is exactly the maximum in (\ref{equation: primal}), and, by
(\ref{equation: connection}), this maximum is just $v-\epsilon(v-\alpha^*(G))$ (yielding the lower bound
$N^c(G,H)\ge Cn^v\beta^{v-\alpha^*(G)}$).

Now each compatible copy of $G$ in $H$ may be considered as a $v$-vertex hyperedge on $V(H)$; together these form
the hypergraph ${\mathcal H}$, whose edges correspond to compatible copies of $G$ in $H$. Rationalizing a solution
to (\ref{equation: dual}) by $a_{uw}=\lceil My_{uw}\rceil$ and $b_{u}=\lceil Mz_u\rceil$, where $M$ is a large positive integer,
we form a sequence of subsets of $V(H)$ by taking each $B_u$ $b_{u}$ times and each $B_u\cup B_w$ $a_{uw}$ times.
By construction, and (\ref{equation: dual}), each vertex in each $B_u$ is covered by at least $M$ of these subsets. For a subset
$V'\subset V(H)$, write ${\rm Tr}({\mathcal H},V')=\{h\cap V':h\in {\mathcal H}\}$ for the trace of ${\mathcal H}$ on $V'$.
Then, by Shearer's lemma~\cite{Shearer},
\begin{align*}
N^c(G,H)=|{\mathcal H}|
&\le\left(\prod_{u\in V(G)}|{\rm Tr}({\mathcal H},B_u)|^{b_u}\prod_{uw\in E(G)}|{\rm Tr}({\mathcal H},B_u\cup B_w)|^{a_{uw}}\right)^{1/M}\\
&\le\left(\prod_{u\in V(G)}n^{b_u}\prod_{uw\in E(G)}n^{(2-\epsilon)a_{uw}}\right)^{1/M}
\to \prod_{u\in V(G)}n^{z_u}\prod_{uw\in E(G)}n^{(2-\epsilon)y_{uw}}\\
&=n^v(\beta/2)^{v-\alpha^*(G)},
\end{align*}
as $M\to\infty$, where, in the last line, we have used the duality theorem of linear programming. 
This is the sought-after upper bound.

We mention for completeness that Janson, Oleszkiewicz and Ruci\'{n}ski's results were generalised to hypergraphs by Dudek, Polcyn and
Ruci\'{n}ski~\cite{DPR}.

\section{Conjectures}\label{section: conjectures}

In this section, we make some new conjectures about the asymptotic value of ${\rm ex}(n,e,G)$. These are essentially the simplest
modifications of Nagy's conjecture which fit the known data.

First we define the concept of an {\it upper profile boundary}. For a fixed labelled graph $G_l$ on $v$ vertices, we look
at the number of homomorphisms ${\rm hom}(G_l,H)$ from $G_l$ to $H$, where $H$ ranges over the set of all unlabelled graphs.
To each graph $H$ with $n$ vertices and $e$ edges, we associate the point
\[
p(G_l,H)=(2e/n^2,{\rm hom}(G_l,H)/n^v)\in[0,1]^2,
\]
whose $x$-coordinate is the edge density of $H$, and whose $y$-coordinate is the {\it homomorphism density} of $G_l$ in $H$.
In this way, each labelled graph $G_l$ gives rise to a {\it profile} $P(G_l)\in[0,1]^2$, defined as the closure of the set
of all the points $p(G_l,H)$. The {\it upper profile boundary} of $G_l$ is the upper boundary of $P(G_l)$;
it is not hard to see that this boundary is the graph of a function $f(G_l,\beta)$ of the edge density $\beta$.
See~\cite{Lo} (page 28) for a picture of the profile of $K_3$.

We return to the graphs $T^e_n(q)$. For a given graph $G_l$, and a given edge density $\beta$ (of $H$), define $f_T(G_l,\beta)$
by the formula
\[
f_T(G_l,\beta)=\sup_{q\in[0,1]}t(G_l,T^e_n(q)),
\]
where $e=\beta n^2/2$, and let $q(G_l,\beta)\in[0,1]$ be the value of $q$ at which the supremum is attained. In other words,
the function $f_T(G_l,\beta)$ is the normalised asymptotic number of copies of $G_l$ in the optimised $T$-graph. Now we are
ready to state our conjectures.

\medskip

\noindent{\bf Conjecture 1.} For all graphs $G_{l}$ and all $\beta \in [0,1]$, we have that $f(G_l,\beta)=f_T(G_l,\beta)$. In other words, for all graphs $G_l$ and all edge densities $\beta$,
some graph family $H_{n} =T^e_n(q)$ asymptotically maximises ${\rm hom}(G_l,H_{n})$ and $N_l(G_l,H_{n})$.

\medskip

\noindent{\bf Conjecture 2.} For each graph $G_l$, we have that $q(G_l,\beta)$ is an increasing function of $\beta$.

\medskip

A slightly stronger version of Conjecture 1 can be most clearly stated in terms of the ``STK notation". Indeed, it appears
that, for each graph $G_l$, there is a partition of the set $[0,1]$ of edge densities into three sets $S,T$ and $K$ (we suppress
the dependence on $G_l$) such that for $\beta\in S$, the quasi-star (asymptotically) maximises $N(G_l,H)$, for $\beta\in T$
some graph $T^e_n(t)$ (with $t\in (0,1)$) maximises $N(G_l,H)$, and for $\beta\in K$, the quasi-clique $K^e_n$ maximises
$N(G_l,H)$. If in addition Conjecture 2 holds, these partitions have a particularly simple form. Indeed, in keeping with the
theorem of Reiher and Wagner~\cite{RW}, only four possibilities can arise:

\medskip

\noindent Type {\bf K}: $K=[0,1]$,

\noindent Type {\bf SK}: $S=[0,\gamma]$ and $K=[\gamma,1]$, for some $\gamma\in(0,1)$,

\noindent Type {\bf TK}: $T=[0,\gamma]$ and $K=[\gamma,1]$, for some $\gamma\in(0,1)$,

\noindent Type {\bf STK}: $S=[0,\gamma],T=[\gamma,\delta]$ and $K=[\delta,1]$, for some $0<\gamma<\delta<1$.

\medskip

With this notation,
Alon~\cite{Alon} characterised graphs of type {\bf K}, Ahlswede and Katona~\cite{AK} proved that $P_2$ is type {\bf SK},
Nagy~\cite{N} proved that $P_4$ is type {\bf SK} and conjectured that all graphs are either type {\bf K} or {\bf SK},
and Reiher and Wagner~\cite{RW} proved that stars are type {\bf SK}, enabling Gerbner, Nagy, Patk\'os and Vizer~\cite{GNPV}
to prove that the type always ends in {\bf--K}. In contrast, the results of Janson, Oleszkiewicz and Ruci\'{n}ski only have a 
bearing on the {\it start} of the type; for instance, the type of $G_6$ cannot begin with either {\bf S--} or {\bf K--}, 
and we conjecture that it is in fact {\bf TK}.  We remark that we are unaware of any graphs of type {\bf STK}, and would be very interested to know whether or not such graphs exist.  See Figure \ref{Fig5} in the Appendix for a summary of the various types of all connected\footnote{Any graph on $5$ or fewer vertices that is not connected must contain either an isolated vertex or an isolated edge, and so can be reduced to a smaller graph.} graphs on at most $5$ vertices.  

We conclude our paper with a weaker version of Conjecture 1, concerning the the behaviour of $f$ and $f_T$ as $\beta\to 0$:

\medskip 

\noindent{\bf Conjecture 3.} As $\beta\to 0$, we have that $f(G_l,\beta)\sim f_T(G_l,\beta)$ for all graphs $G_{l}$.

\medskip

\section{Acknowledgements}
The work of both authors was supported by STINT foundation grant IB2017-7360.  Research of the first author was funded by a grant from the Swedish Research Council, whose support is also gratefully
acknowledged.

\begin{figure}[ht]
\textbf{\Large{Appendix}}\par\medskip\medskip
\centering
\includegraphics[scale = 0.67]{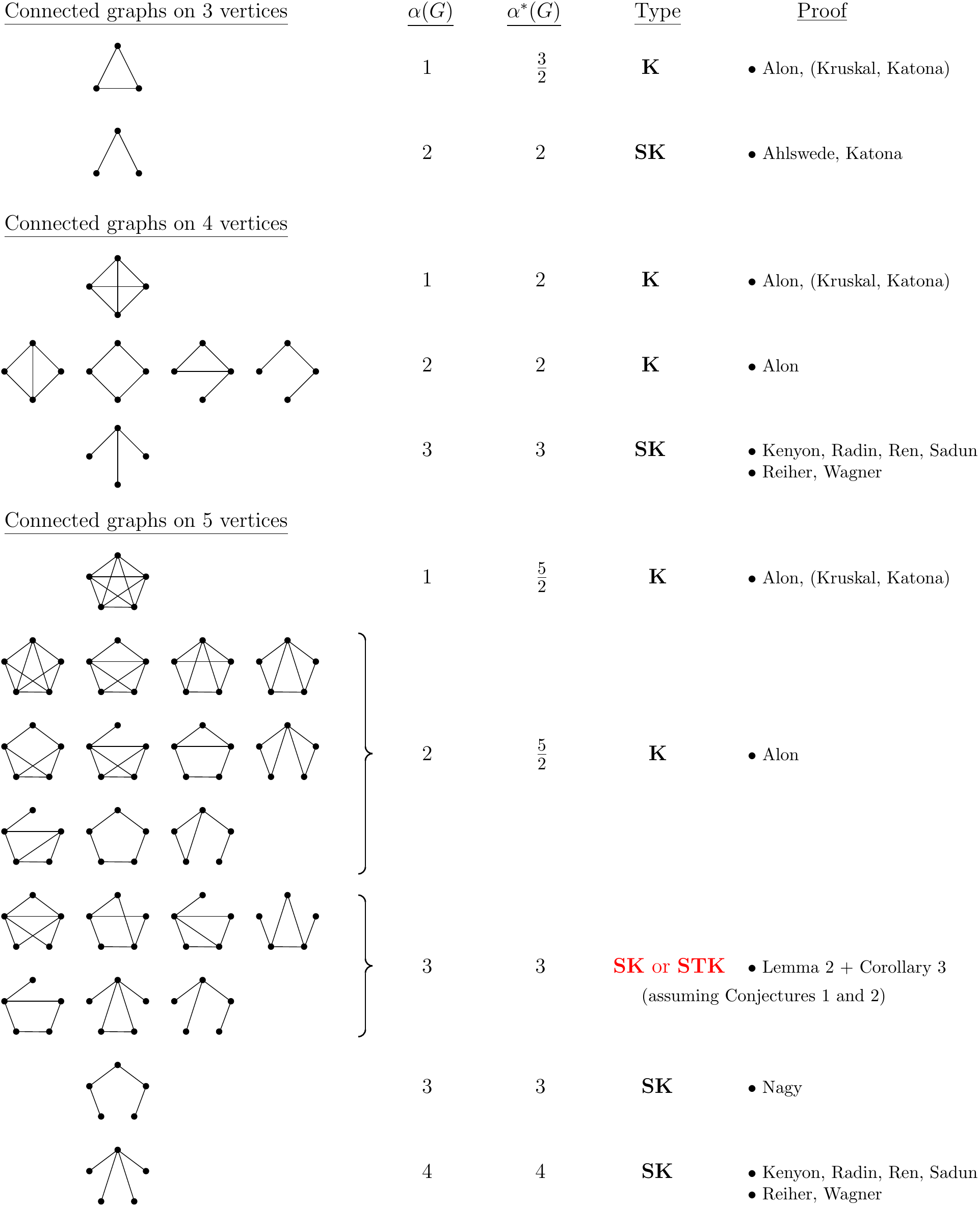}
\caption{All the known types of connected graphs on $5$ or fewer vertices.  The types of all but $7$ of these graphs are known based on the results of the listed authors.  If one assumes that Conjectures 1 and 2 are true, then the remaining $7$ graphs must be of type \textbf{SK} or type \textbf{STK}.}
\label{Fig5}
\end{figure}
\end{document}